\newtheorem{theorem}{Theorem}[section]
\newtheorem{corollary}[theorem]{Corollary}
\newtheorem{lemma}[theorem]{Lemma}
\newtheorem{conjecture}[theorem]{Conjecture}
\newcommand\dg{\operatorname{\textup{{\fontfamily{ptm}\selectfont deg}}}}
\newcommand{\cc}{\mathcal}
\newcommand{\cG}{\mathcal{G}}
\newcommand{\eps}{\varepsilon}    
\newcommand\rk{\operatorname{\textup{{\fontfamily{ptm}\selectfont rk}}}}
\newcommand\crk{\operatorname{\textup{{\fontfamily{ptm}\selectfont crk}}}}
      \def\@setcopyright{}
      \def\serieslogo@{}
\begin{document}
\author{Amin  Bahmanian}
   \address{Department of Mathematics,
  Illinois State University, Normal, IL USA 61790-4520}
 \author{Songling Shan}\thanks{Songling Shan was partially supported by NSF  grant 
 	DMS-2345869.}
   \address{Department of Mathematics and Statistics,
  Auburn University, Auburn, AL USA 36849}
   \email{szs0398@auburn.edu}

  \title[Spanning Euler Tours in Hypergraphs]{Spanning Euler Tours in Hypergraphs}
    
   \begin{abstract}  
Motivated by generalizations of de Bruijn cycles to various combinatorial structures (Chung, Diaconis, and Graham), we study various Euler tours in set systems. Let $\cc G$ be a hypergraph whose corank and rank are $c\geq 3$ and $k$, respetively. The minimum $t$-degree of $\cc G$ is the fewest number of edges containing every $t$-subset of vertices. 
An {\it Euler tour} ({\it family}, respectively) in  $\cc G$ is a (family of, respectively) closed walk(s)  that (jointly, respectively) traverses each edge of $\cc G$ exactly once. An Euler tour is {\it spanning} if it traverses all the vertices of $\cc G$. We show that $\cc G$ has an Euler family if its incidence graph is $(1+\lceil k/c \rceil)$-edge-connected.  Provided that the number of vertices of $\cc G$ meets a reasonable lower bound, and either $2$-degree is at least $k$ or $t$-degree is at least one for $t\geq 3$, we show that $\cc G$ has a spanning Euler tour.

To exhibit the usefulness of our results, we solve a number of open problems concerning ordering blocks of a design (these have applications in other fields such as erasure-correcting codes). Answering a question of Horan and Hurlbert, we show that a Steiner quadruple system of order $n$ has a (spanning)  Euler tour if and only if $n\geq 8$ and $n\equiv 2,4 \pmod 6$, and we prove a similar result for all Steiner systems, as well as all designs except for 2-designs whose index $\lambda$ is less than the largest block size. We nearly solve a conjecture of Dewar and Stevens on the existence of universal cycles in pairwise balanced designs. Motivated by R.L. Graham's question on the existence of Hamiltonian cycles in block-intersection graphs of Steiner triple systems, we establish the Hamiltonicity of the block-intersection graph of a large family of (not necessarily uniform) designs.
All our results are constructive and of polynomial time complexity. 

\end{abstract}
   \subjclass[2010]{05C65, 05C45, 05C70, 05C40, 05C62, 05B05}
 \keywords{ spanning Euler tour;  eulerian hypergraph; Euler family; quasi-eulerian hypergraph;   discharging, universal cycle, $(g,f)$-factor}

 \date{\today}

   \maketitle    
  
\section{Introduction} 
Despite Euler tours being the subject of the first paper in graph theory \cite{euler1741}, and despite their numerous and wide range of applications from bioinformatics to reconstruct the DNA sequence \cite{PevznerDNA} to coding theory \cite{MR1491049}, the corresponding notion for hypergraphs is not well-understood. A   source of interest in Euler tours in hypergraphs is universal cycles for set systems; these are special Euler tours in complete uniform hypergraphs, and were introduced by Chung, Diaconis, and Graham \cite{MR1197444}. 
 Another  source of interest is applications in geographic information systems; in this context,  Euler tours  have been studied for 3-uniform hypergraphs \cite{BartGold1, MR2057782}.  
 Our goal is to  (i) characterize  uniform quasi-Eulerian hypergraphs of corank at least three (see Theorem \ref{quasimainthm}), and (ii) obtain  minimum degree conditions under which a (not necessarily uniform) hypergraph (possibly with multiple edges) has a spanning  Euler tour (see Theorem \ref{mainlong}). To elaborate, we need some background.

A {\it hypergraph} $\mathcal G$ is a pair $(V ,E)$ where $V$ is a finite set called the {\it vertex} set, $E$ is the {\it edge} (multi)set, where every edge is  a sub(multi)set of $V$. We write $\mu(\cc G)$ for the maximum of the edge multiplicities in $\cc G$, and $c(\cc G)$ denotes the number of components of $\cc G$.   The {\it rank} and {\it corank} of $\mathcal G$, written $\rk(\mathcal G)$ and $\crk(\mathcal G)$, respectively, are $\max \{|e|: e\in E\}$ and $\min \{|e|: e\in E\}$, respectively, and $\cG$ is {$k$}-uniform if $\rk(\mathcal G)=\crk(\mathcal G)=k$.  For  $T\subseteq V(\cG)$, the {\it degree} of $T$, written $\dg_\cG(T)$, is the number of edges containing $T$, abbreviate $\dg_\cG(\{v\})$ to $\dg_\cG(v)$ for $v\in V$, and {\it the minimum $k$-degree}, written $\delta_k(\cG)$, is $\min \{\dg_\cG(T): T\subseteq V(\cG), |T|=k\}$. The maximum $k$-degree, written $\Delta_k(\cG)$, is defined in a similar way, and the $k$-degree of $\cc G$ is $d$ if $\Delta_k(\cG)=\delta_k(\cG)=d$. 
A {\it flag} in a hypergraph $\cG$ is a pair $(v,e)$ where $v\in V, e\in E$ and $v\in e$. A connected hypergraph is $k$-flag-connected if it remains connected whenever fewer than $k$ flags are removed, 
where given a set  $F$ of $k$ flags of $\cG$,   removing flags in $F$ means that for all edges $e\in E(\cG)$,  if letting  $V_F(e) =\{v\in e: (v,e) \in F\}$, then we replace $e$ by $e\setminus V_F(e)$. 
Any $k$-edge-connected hypergraph is $k$-flag-connected, but the converse is not true in general.  For basic hypergraph terminology, we refer the reader to \cite{MR3578165}.

Let $\mathcal G=(V,E)$ be a connected  hypergraph. Let the {\it closed walk} $W:=v_1,e_1,v_2,\dots,v_t,e_t$ be an alternating sequence  of  vertices $v_i$ and  edges $e_i$ of $\cG$ with 
 $\{v_i,v_{i+1}\}\subseteq e_i$ and $v_i\neq v_{i+1}$ for $i\in [t]:=\{1,\dots,t\}$ (indices are taken module $t$). We say that $W$ {\it traverses} a flag $(v,e)$ if for some $i\in [t]$ we have $v=v_i$ and  $e\in \{e_{i-1},e_i\}$. Similarly, $W$ traverses an edge $e$ (a vertex $v$, respectively) if for some $i\in [t]$, $e=e_i$ ($v=v_i$, respectively).   If $W$ traverses each flag of $\cG$ exactly once, then  we say that $W$ is a {\it flag spanning tour}. 
If $W$ traverses each edge of $\cG$ exactly once, then we say that $W$ is an {\it Euler tour}, and we say that $\cG$ is {\it Eulerian}. A {\it spanning Euler tour} is an Euler tour that traverses each vertex of $\cc G$. A family $\mathscr W:=\{W_1,\dots,W_k\}$ of closed walks in $\cG$ is an {\it Euler family} if each edge of $\cG$ is traversed by exactly one element of  $\mathscr W$, and   we say that $\cG$ is {\it quasi-Eulerian}. 
All  four notions coincide if $\cG$ is a graph. Moreover, every Eulerian hypergraph is quasi-Eulerian, but the converse is not true in general.

A hypergraph has a  flag spanning tour if and only if all of its vertex degrees and all edge sizes are even \cite{MR3691547}. The problem of deciding whether a given hypergraph is quasi-Eulerian is polynomial \cite{MR3691547}, but the problem of deciding whether a given (linear 3-uniform) hypergraph is Eulerian is NP-complete \cite{MR2745697,MR3691547}, and so  hypergrapahs  with a (spanning) Euler tour are difficult to characterize. One may define Euler tours  based on the size of intersection of consecutive edges in a walk \cite{MR3863294}; the remarkable  result of \cite{MR4181762}  settles an important conjecture  on the existence of such tours  in complete uniform hypergraphs \cite{MR1197444}.  
A natural approach to find an Euler tour is to  carefully modify  an Euler family to make it connected.  In fact, this was the initial motivation in \cite{MR3691547}  to introduce Euler families and classify quasi-Eulerian hypergraphs via $(g,f)$-factors (although, it is not yet easy to determine whether or not a given hypergraph is quasi-Eulerian). An edge $e$ is a {\it strong cut edge} in $\cc G$ if $c(\cc G-e)=c(\cc G)+|e|-1$.  It is known that  2-edge-connected 3-uniform hypergraphs, and  $k$-uniform hypergraphs ($k\geq 3$) of minimum 2-degree one  are quasi-Eulerian \cite{MR3691547, MR4622724}. Since  quasi-Eulerian hypergraphs have no strong cut edges \cite[Lemma 2]{MR3691547}, the following result leaves a small gap in classifying all uniform quasi-Eulerian hypergraphs of corank at least three. 
    \begin{theorem} \label{quasimainthm}
  	Any $\left(1+\left\lceil \rk(\cG)/\crk(\cG) \right\rceil\right)$-flag-connected hypergraph $\cc G$ with $\crk(\cG)\geq 3$ is quasi-Eulerian. 
  \end{theorem}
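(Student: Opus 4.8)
The plan is to translate the problem into the incidence graph. Let $\mathcal I=\mathcal I(\cG)$ be the bipartite graph with parts $V(\cG)$ and $E(\cG)$ and an edge $ve$ precisely when $v\in e$. Two dictionary observations drive everything. First, removing a flag $(v,e)$ from $\cG$ is exactly deleting the edge $ve$ of $\mathcal I$, so $\cG$ is $m$-flag-connected if and only if $\mathcal I$ is $m$-edge-connected. Second, $\cG$ is quasi-Eulerian if and only if $\mathcal I$ has a subgraph $H$ with $\deg_H(e)=2$ for every $e\in E(\cG)$ and $\deg_H(v)$ even for every $v\in V(\cG)$: indeed, such an $H$ decomposes into edge-disjoint cycles of $\mathcal I$, each of which, read off as an alternating sequence of vertices and hyperedges of $\cG$, is a closed walk (consecutive vertices are distinct because a cycle of a simple bipartite graph has length at least $4$), and since each $e$-vertex has degree $2$ in $H$ every hyperedge lies on exactly one of these cycles, so the closed walks form an Euler family; the converse is immediate. (Equivalently one can phrase this as a parity-constrained $(g,f)$-factor and invoke the characterization of \cite{MR3691547}, or as a balanced arc-selection, i.e.\ a circulation.) Write $c=\crk(\cG)\ge 3$, $k=\rk(\cG)$, $d=\lceil k/c\rceil\ge 1$; so $\mathcal I$ is $(1+d)$-edge-connected, every $e$-vertex has degree $|e|\in[c,k]$, and every $v$-vertex has degree at least $1+d\ge 2$. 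The task is to construct the subgraph $H$.

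First I would produce a ``near-factor'' $H_0$ with $\deg_{H_0}(e)=2$ for every $e$, using no connectivity at all: this is a bipartite degree-prescribed subgraph, and for every $S\subseteq E(\cG)$ the demand $2|S|$ is less than $3|S|\le c|S|\le\sum_{e\in S}|e|$, the number of incidences available to $S$, so the relevant Hall-type/max-flow feasibility condition holds and $H_0$ is found greedily. Now among all subgraphs $H$ of $\mathcal I$ with $\deg_H(e)=2$ for every $e$, choose one minimizing $|O_H|$, where $O_H:=\{v:\deg_H(v)\text{ odd}\}$; since $\sum_v\deg_H(v)=2|E(\cG)|$, $|O_H|$ is even. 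If $O_H=\varnothing$ we are done; suppose $u\in O_H$. Because each $e$-vertex has degree $2<|e|$ in $\mathcal I$ it has a free incidence, so alternating augmentations are available: let $R\subseteq V(\cG)$ be the set of vertices reachable from $u$ along an $H$-alternating trail of $\mathcal I$ (alternating between $H$-edges and non-$H$-edges at every $e$-vertex it passes through, free choice at $v$-vertices). Symmetric-differencing $H$ with such a trail from $u$ to a second vertex $u'\in O_H$ preserves $\deg_H(e)=2$ everywhere and flips parity only at $u$ and $u'$, so by minimality $O_H\cap R=\{u\}$; since $|O_H|$ is even there is an odd vertex outside $R$, whence $R$ is a nonempty proper subset of $V(\cG)$.

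Finally I would convert the stopping condition into a forbidden small edge-cut of $\mathcal I$. From the fact that no alternating trail leaves $R$ one extracts, for each hyperedge $f$, a dichotomy: either both $H$-selected vertices of $f$ lie outside $R$, or every unselected vertex of $f$ lies inside $R$. Placing $R$ on one side of a bipartition of $V(\mathcal I)$ and putting each hyperedge on whichever side makes fewer of its incidences cross, this dichotomy forces every crossing edge of $\mathcal I$ to be an incidence of a hyperedge that straddles the bipartition, and such a hyperedge has at least $c$ of its $\le k$ vertices on the far side while contributing only a bounded number of crossing incidences. A discharging argument over the straddling hyperedges — possibly after refining the choice of $H$ to rule out locally improvable configurations — then bounds the number of crossing edges by $\lceil k/c\rceil=d$, contradicting $(1+d)$-edge-connectivity of $\mathcal I$. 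This forces $O_H=\varnothing$, yielding $H$ and hence the Euler family; as $H_0$ and each augmenting trail are computed in polynomial time, so is the Euler family. The step I expect to be the real obstacle is precisely this last one: obtaining the sharp cut bound $\le\lceil\rk/\crk\rceil$ — ruling out many hyperedges straddling the barrier and jointly producing a large cut — which is exactly where the threshold $1+\lceil\rk/\crk\rceil$ is forced and where the discharging is needed, together with establishing the hyperedge dichotomy carefully when alternating trails revisit vertices.
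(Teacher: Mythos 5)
Your reduction to the incidence graph is the same first step as the paper's (flag-connectivity of $\cc G$ equals edge-connectivity of $\mathscr I(\cc G)$, and an Euler family is an even $(E,2)$-regular subgraph), and your augmentation framework is morally a re-derivation of the parity $(g,f)$-factor machinery that the paper simply cites (Theorem \ref{parity-factor}). But the proof has a genuine gap, and it is exactly the step you flag at the end: nothing in the proposal actually converts the failure of augmentation into a contradiction with $\left(1+\left\lceil k/c\right\rceil\right)$-edge-connectivity. Worse, the route you sketch for that step is misdirected. In the paper the connectivity hypothesis is \emph{not} used to exhibit one global edge-cut of size at most $\lceil k/c\rceil$; it is applied locally, component by component, to a minimum barrier $(S,T)$ of the parity-factor theorem: each $T$-odd component $O_i$ with $\eps(T,O_i)=1$ is separated from the rest by the edges into $S\cup T$, so $\eps(S,O_i)\ge\lceil k/c\rceil$, and a two-stage discharging ($S\to\{o_i\}\to T$) then gives every $x\in T$ charge at least $c$ (this is where $\crk\ge 3$ enters, via $\frac{\eps(T,o_i)-1}{\eps(T,o_i)}\ge\frac23$), which contradicts the counting inequality $|T|-|S|>\frac12\sum_i\bigl(\eps(T,O_i)-1\bigr)$ satisfied by a minimum barrier (Lemma \ref{minimum-barrier}(iv)). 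None of this quantitative content — the minimum-barrier structure, the per-component use of connectivity, the $c\ge 3$ discharging — appears in the proposal; it is only asserted that ``a discharging argument then bounds the number of crossing edges.''

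There is also a specific flaw in the reachability step. Your claimed dichotomy, taken at face value, is too strong: if $v\in R$ and $vf\notin H$ you can alternate through $f$ to reach both $H$-selected vertices of $f$, and if $vf\in H$ you reach all unselected vertices; since $|f|\ge c\ge 3$ guarantees an unselected vertex, any hyperedge meeting $R$ would have all its vertices in $R$, whence $R=V(\cc G)$ by connectivity alone — i.e., the argument as sketched would prove that every connected hypergraph of corank at least $3$ is quasi-Eulerian, which is false (hypergraphs with strong cut edges are not quasi-Eulerian). The failure point is precisely the trail-reuse issue you mention in passing: alternating trails through already-visited hyperedge-vertices do not compose into valid augmentations, and handling this correctly is what forces the barrier formulation (Lovász) and the subsequent discharging. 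So the proposal sets up the right stage but omits, rather than supplies, the argument in which the threshold $1+\lceil\rk/\crk\rceil$ and the hypothesis $\crk\ge 3$ actually do their work.
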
 
\noindent In particular,  any 2-flag-connected (and consequently, any 2-edge-connected) $h$-uniform hypergraphs with $h\geq 3$ is quasi-Eulerian. Since minimum 2-degree one implies 2-edge-connectivity, this special case of Theorem \ref{quasimainthm} provides a short proof of the main result of \cite{MR4622724}. 
We  remark that it is immediate from definitions that if $\cc G$ is 2-flag-connected, then it has no  strong cut edges, but the converse is not true in general. 

The {\it line graph} ({\it $\ell$-line graph}, respectively) $\mathscr L(\cc G)$ ($\mathscr L_\ell(\cc G)$, respectively) of a hypergraph $\cc G=(V,E)$ is a graph whose vertex set is $E$, and distinct $e,f\in E$ are adjacent in  $\mathscr L(\cc G)$ if $e\cap f\neq \emptyset$ ($|e\cap f|=\ell$, respectively) in $\cc G$. Steimle and \v Sajna \cite{MR3843267} studied spanning Euler tours in hypergraphs with particular vertex cuts \cite{MR3843267}. Lonc and Naroski characterized $k$-uniform hypergraphs with a connected  $(k-1)$-line graph that are Eulerian \cite{MR2745697}. By gluing together the Euler families of  2-edge-connected 3-uniform hypergraphs in \cite{MR3691547},  Wagner and \v Sajna \cite{MR3619996}  were able to show  that triple systems are Eulerian, which improved the results of \cite{MR3217514}. This idea was further used to establish  that  every $k$-uniform hypergraph $H$ with $\delta_{k-1}(H)\geq 1$ is Eulerian \cite{MR4472768}.

A majority of the work on Eulerian hypergraphs is disguised under different terminology in design theory. A {\it $t$-$(v,k,\lambda)$ design} (or a {\it $t$-design}) is a $v$-vertex $k$-uniform hypergraph  whose $t$-degree is exactly $\lambda$. {\it Points},  {\it blocks}, {\it block-intersection graphs}, and {\it universal cycles of rank two} (or   {\it 1-overlap cycles}) are vertices,  edges, line graphs, and Euler tours, respectively. A {\it Steiner system} $S(t,k,v)$ is a   design with $\lambda=1$, and {\it Steiner triple systems}, and {\it Steiner quadruple systems} are Steiner systems with  $(t,k)=(2,3)$, and $(t,k)=(3,4)$, respectively. 
A {\it BIBD} is a 2-design, and a {\it $(v,K,\lambda)$-PBD} is a $v$-vertex hypergraph whose $2$-degree is $\lambda$, and that whose edge cardinalities are in a set of integers $K$.  Ordering blocks of a design  has striking applications; for example ordering blocks of a Steiner triple system  leads to erasure-correcting codes \cite{MR1981142}. Since the line graph of an Eulerian hypergraph is Hamiltonian, another motivation to  order blocks of a design is R. L. Graham's question on the Hamiltonicity of the block-intersection graph of  Steiner triple systems \cite{MR1079733,MR0982111,MR2831806,MR3514754,MR2551985}. A notable conjecture is the following (see \cite[Conjecture 6.2.1]{MR2711021}, or \cite[Conjecture 5.6]{MR2963059}).
\begin{conjecture} \cite{MR2963059}  \label{conjdewar}
    Every $(v,K,\lambda)$-PBD with $\min K\geq 3$ and a sufficient number of non-regular base blocks admits a universal cycle of rank two. 
\end{conjecture}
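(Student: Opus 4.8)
The plan is to transfer the problem into the language of this paper and then apply its two main results. A universal cycle of rank two for a $(v,K,\lambda)$-PBD is, by definition, a spanning Euler tour of the design viewed as a hypergraph $\cG$, so it suffices to exhibit such a tour. Note that $\crk(\cG)=\min K\ge 3$ and $\rk(\cG)=\max K$, that the $2$-degree of $\cG$ is exactly $\lambda$ (each pair of points lies in $\lambda$ blocks), and that $\cG$ is connected because its $2$-shadow is the complete graph on $v$ vertices. The first, easy regime is $\lambda\ge\max K$: here the $2$-degree equals $\lambda\ge\rk(\cG)$, so once the ``reasonable lower bound'' on $v$ required by Theorem~\ref{mainlong} is met --- which for a PBD is forced by the usual Fisher-type counting $\lambda(v-1)=\sum_{B\ni x}(|B|-1)$ together with the abundance of blocks implied by having many base blocks --- Theorem~\ref{mainlong} immediately yields a spanning Euler tour, hence the desired universal cycle.

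The substance is the small-index regime (including $\lambda=1$, e.g.\ $S(2,K,v)$), where the $2$-degree hypothesis of Theorem~\ref{mainlong} fails and a PBD carries no control on its $t$-degrees for $t\ge 3$. The route here is the ``Euler family, then merge'' strategy indicated in the introduction. First, one checks that a PBD with $\min K\ge 3$ and enough blocks is $\bigl(1+\lceil\rk(\cG)/\crk(\cG)\rceil\bigr)$-flag-connected: since every pair of points is covered and every edge has size at least three, deleting a bounded number of flags cannot separate off a nontrivial piece, and the only danger is isolating a single point of small degree --- which is precisely what a sufficient supply of non-regular base blocks rules out, by forcing every point to lie on enough blocks. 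Theorem~\ref{quasimainthm} then delivers an Euler family of $\cG$. Next, the non-regular base blocks are used as rerouting gadgets: the translates of a short-orbit base block under the defining automorphism group form a highly symmetric cluster of pairwise-overlapping blocks through which a closed walk can be rerouted, so that a sufficient supply of such clusters lets us splice the members of the Euler family into a single closed walk without repeating any edge. A final argument, again using that every pair of points is covered by a block, upgrades the resulting Euler tour to a spanning one.

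The main obstacle is exactly the last two steps: converting the soft hypothesis ``a sufficient number of non-regular base blocks'' into a hard, uniform-in-$(K,\lambda)$ guarantee that (i) the flag-connectivity threshold of Theorem~\ref{quasimainthm} is reached and (ii) the Euler family can always be merged into a spanning Euler tour. Pinning down how many non-regular base blocks actually suffice, and isolating the finitely many genuinely exceptional small designs, is what separates a complete proof of Conjecture~\ref{conjdewar} from the ``near'' solution obtained here; the residual difficulty is concentrated in $2$-designs with $\lambda<\max K$ and only a handful of non-regular base blocks, which appear to require a separate, somewhat ad hoc analysis.
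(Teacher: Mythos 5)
This statement is not a theorem of the paper at all: it is Dewar and Stevens's conjecture, quoted from \cite{MR2963059}, and the paper explicitly only \emph{nearly} solves it, namely via Corollary~\ref{dewstevconjcor}, which handles the case $\lambda\geq\max K$ (with sufficiently many points) as a direct consequence of Theorem~\ref{mainlong}(i). Your first paragraph is exactly that partial result and matches the paper's route. Everything after it is a research sketch, not a proof, and you concede as much in your closing paragraph; so what you have produced is the same partial result as the paper plus an unproved plan for the remaining (and genuinely open) regime $\lambda<\max K$.

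The concrete gaps in that plan are the following. First, Theorem~\ref{quasimainthm} can only ever give you an Euler \emph{family}; the paper has no general device for merging an Euler family into a single (let alone spanning) Euler tour other than the degree hypotheses of Theorem~\ref{mainlong}, which are precisely what fail for a $2$-design with small $\lambda$ (a PBD gives no control on $t$-degrees for $t\geq 3$). Your proposed merging step via ``rerouting gadgets'' built from orbits of non-regular base blocks is never constructed: ``non-regular base block'' refers to the automorphism-group presentation of the design in Dewar--Stevens, a structure that the hypergraph machinery of this paper (incidence graphs, barriers, discharging) never interacts with, and you give no argument that such orbits overlap the walks of the Euler family in a way that permits splicing without reusing edges. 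Second, the claim that ``a sufficient number of non-regular base blocks'' forces $\bigl(1+\lceil\rk(\cG)/\crk(\cG)\rceil\bigr)$-flag-connectivity is unsubstantiated: the count of base blocks does not by itself control point degrees or flag cuts without specifying the group and orbit lengths, and even granting it you would only land back at quasi-Eulerian. Third, the final ``upgrade to spanning'' step is asserted, not argued. So the proposal does not close the conjecture; it reproves the paper's Corollary~\ref{dewstevconjcor} and leaves the same gap the paper leaves open.
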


An $n$-vertex hypergraph $\cc G$ is {\it $(n,c,k,\mu)$-admissible} if  $\crk(\cc G)=c\geq 3, \rk(\cc G)=k, \mu(\cc G)=\mu$, and  $n\geq g(c,k,\mu)$ where $g(3,3,\mu)=7$, $g(3,4,\mu)=10$,  $g(c,k,\mu)=\binom{k}{2}+1$ if $c\geq 4, k\leq 2c-2$,  $g(c,k,\mu)=(4c^2+k^2-3k+2)/(4c-2k+2)$ if $c\geq 3,2c-1\leq k\leq 2c$, and $g(c,k,\mu)=2\mu(k-1)(2^{k-c-1}-2^c+1)+\binom{k}{2}+1$ if $k>2c$.
Here is our next result.
\begin{theorem} \label{mainlong}
An $(n,c,k,\mu)$-admissible hypergraph $\cc G$ 
  has a  spanning  Euler tour if any of the following  conditions hold.
 \begin{itemize}
     \item [(i)] $\delta_2(\cc G)\geq k$;
     \item [(ii)] $\delta_3(\cc G)\geq 1$, $n\geq k^2-3k+5$;
     \item [(iii)] $\delta_r(\cc G)\geq 1$, $4\leq r\leq k$.     
 \end{itemize} 
\end{theorem}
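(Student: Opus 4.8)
The plan is to reduce the three hypotheses to the single case $\delta_2(\cc G)\ge k$, extract an Euler family from Theorem~\ref{quasimainthm}, and then repair it into a connected, spanning Euler tour by an extremal argument; I expect this last step to be organized as a discharging argument on a minimal counterexample, since it is precisely the bound $n\ge g(c,k,\mu)$ that has to be absorbed. The reduction to case (i) is short: fix vertices $u,v$. In case (iii), each of the $\binom{n-2}{r-2}$ $r$-subsets of $V$ containing $\{u,v\}$ lies in some edge, while an edge of size $\le k$ contains at most $\binom{k-2}{r-2}$ of them, so $\dg_{\cc G}(\{u,v\})\ge\binom{n-2}{r-2}/\binom{k-2}{r-2}$, and this ratio --- smallest at $r=4$, where it is quadratic in $n$ --- is at least $k$ for all $4\le r\le k$ once $n\ge g(c,k,\mu)$. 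In case (ii) the same computation with $r=3$ gives $\dg_{\cc G}(\{u,v\})\ge\lceil(n-2)/(k-2)\rceil\ge k$ as soon as $n\ge k^2-3k+5$. So henceforth assume $\delta_2(\cc G)\ge k$.

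Second, I would check that $\delta_2(\cc G)\ge k$ (with $n$ large) yields the flag-connectivity hypothesis of Theorem~\ref{quasimainthm}, so that $\cc G$ is quasi-Eulerian. The idea: if deleting a set $F$ of at most $\lceil k/c\rceil$ flags disconnected $\cc G$, say via a partition $V=A\cup B$ with every shrunk edge lying inside $A$ or inside $B$, then for $a\in A$ and $b\in B$ each of the $\dg_{\cc G}(\{a,b\})\ge k$ edges through $\{a,b\}$ must have lost its flag at $a$ or its flag at $b$; distinct edges lose distinct flags, so $|F|\ge k>\lceil k/c\rceil$ (using $k\ge c\ge 3$), a contradiction. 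The few degenerate outcomes of the deletion --- a shrunk edge becoming empty, a vertex becoming isolated --- are ruled out the same way when $n$ is as large as admissibility demands. Thus Theorem~\ref{quasimainthm} supplies an Euler family of $\cc G$.

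Third, among all Euler families of $\cc G$ choose $\mathscr{W}=\{W_1,\dots,W_m\}$ with $m$ minimum and, subject to that, traversing the most vertices, and argue $m=1$. It is convenient to work in the incidence graph $B$ of $\cc G$, where $\mathscr{W}$ becomes a subgraph $H$ with $\deg_H(e)=2$ for every edge-node $e$ and $\deg_H(v)$ even for every vertex-node $v$, its nontrivial components corresponding to the walks. If $m\ge 2$, then using that $\cc G$ is connected and that every edge carries an incidence unused by $H$ (here $\crk(\cc G)=c\ge 3$ enters), one reroutes some $W_i$ so that it shares a traversed vertex with some $W_j$, and then splices $W_i$ and $W_j$ at that vertex into a single walk --- the high $2$-degree is what makes such rerouting always available --- contradicting minimality of $m$. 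Hence $m=1$; write $W$ for the resulting Euler tour.

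Finally, suppose $W$ misses a vertex $v$, i.e.\ $\deg_H(v)=0$. Since $\dg_{\cc G}(v)\ge\delta_2(\cc G)\ge k$ and $W$ traverses every edge, $v$ lies in at least $k$ edges, each already carrying exactly two incidences of $H$, and I must reroute $W$ through $v$ while keeping $\deg_H(e)=2$ for every $e$, keeping every vertex-degree of $H$ even, and keeping $H$ connected; in $B$ this is an alternating cycle or path that lifts $\deg_H(v)$ to $2$ and is compatible with all of these. The obstruction is that threading $v$ into the traversal of a single edge would use that edge twice, whereas re-splicing through two edges at $v$ can tear $W$ into two walks --- so the reroute must avoid both. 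The function $g(c,k,\mu)$ is tuned to make a counting argument close here: the $\ge k$ edges at $v$, together with the boundedly many ``obstructing'' configurations around the current tour, cannot simultaneously block every candidate reroute, and the case split in $g$ (on $k$ versus $2c$, and the factor $\mu$) mirrors the ways those obstructions can be arranged. I would run this as discharging on a minimal counterexample: spread charge over the vertices, edges and flags of $\cc G$, move charge from the missed vertex $v$ and its incident edges out along $W$, and contradict the charge balance once $n>g(c,k,\mu)$. Carrying out this last repair uniformly over the three degree regimes of the theorem and the several ranges of $k$ is the main obstacle; Steps one and two are routine, and Step three is comparatively so.
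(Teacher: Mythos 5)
Your Step 1 (reducing (ii) and (iii) to the case $\delta_2(\cc G)\ge k$ by double counting pairs versus $r$-sets) is exactly the paper's reduction (Theorem \ref{enumlem} together with the computation in the final subsection), and it is fine. The real problem is that Steps 3 and 4 --- which carry the entire content of the theorem once quasi-Eulerianity is granted --- are statements of intent rather than arguments. You never prove that two walks of a minimum Euler family can be made to share a vertex: a closed walk enters and leaves an edge $e$ at two prescribed vertices, so you cannot simply ``reroute $W_i$'' through an unused vertex $u\in e$ --- inserting $u$ either traverses $e$ twice or tears the walk apart, which is precisely the obstruction you concede in Step 4 and then defer to an unspecified counting/discharging scheme (``the function $g(c,k,\mu)$ is tuned to make a counting argument close here''). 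No charges, no discharging rules, and no analysis of the obstructing configurations are supplied, so neither connectivity nor spanning-ness is actually established; such gluing arguments are exactly what \cite{MR3619996,MR4472768} have to work hard for even in special cases, so they cannot be waved through here.

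The paper's proof is structurally different and sidesteps post hoc repair entirely: under $\delta_2(\cc G)\ge k$ it first finds in the incidence graph $G[X,Y]$ a spanning tree $F$ with $\dg_F(x)\le 2$ for all $x\in X$ (Lemma \ref{LEM:Xdegree}(iii), which is where the degree hypothesis and the admissibility bound on $n$ first enter), prunes the leaf edge-vertices to get a tree $F^*$ containing all of $Y$, and then only needs a parity correction: the odd-degree vertices of $F^*$ are paired through gadget vertices $W$, and the auxiliary graph $G^*=G-A+R$ is shown to have an even $(X^*,2)$-regular subgraph via Theorem \ref{parity-factor} and a discharging argument against a minimum barrier (Lemma \ref{minimum-barrier} together with the connectivity estimate \eqref{X-connectivityeq}, which is where the thresholds in $g(c,k,\mu)$ are actually consumed). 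The union $F^*\cup Q^*$ is then automatically connected and spanning, with every edge-vertex of degree two and all vertex degrees even, i.e.\ a spanning Euler tour. Your Step 2 (deriving flag-connectivity from $\delta_2\ge k$ and invoking Theorem \ref{quasimainthm} to get an Euler family) is plausible but is not needed in the paper's scheme and does not help with the hard part; to salvage your outline you would have to actually construct the rerouting/splicing and the discharging of Steps 3--4, which amounts to writing a different proof that does not yet exist in your proposal.
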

Theorem \ref{mainlong} complements existing results on minimum degree conditions that ensure a uniform hypergraph contains a   Hamiltonian cycle \cite{MR3150175, MR1671170, MR2274077, MR2195584}, or a perfect matching \cite{MR2260124, MR2500161}. We also remark that Theorem \ref{mainlong} is new even if we restrict ourselves to (not necessarily spanning) Euler tours  in uniform hypergraphs. 

\begin{corollary}\label{maincor1}
An $n$-vertex $k$-uniform hypergraph $\cc G$ 
  has a  (spanning)  Euler tour if any of the following  conditions hold.
 \begin{itemize}
     \item [(i)] $k=3, n\geq 7, \delta_2(\cc G)\geq 3$;
     \item [(ii)] $k\geq 4$ and either $\delta_2(\cc G)\geq k$, $n\geq \binom{k}{2}+1$ or $\delta_3(\cc G)\geq 1$, $n\geq k^2-3k+5$;
     \item [(iii)] $k\geq 5, n\geq \binom{k}{2}+1, \delta_r(\cc G)\geq 1$, $4\leq r\leq k$.     
 \end{itemize} 
\end{corollary}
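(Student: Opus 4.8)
The plan is to obtain each clause as a direct specialization of the matching clause of Theorem~\ref{mainlong}, so that essentially no new argument is required; the only point to verify is that the numerical hypotheses of the corollary force $\cc G$ to be $(n,c,k,\mu)$-admissible. Since $\cc G$ is $k$-uniform we have $\crk(\cc G)=\rk(\cc G)=k$, so we intend to invoke Theorem~\ref{mainlong} with $c=k$ and with $\mu:=\mu(\cc G)$; the task then reduces to pinning down the value of $g(c,k,\mu)$ in this situation and comparing it with the lower bounds on $n$ appearing in the corollary. Note also that any spanning Euler tour is in particular an Euler tour, so it suffices to produce the spanning version that Theorem~\ref{mainlong} supplies.

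First I would dispatch the case $k=3$ of clause~(i): here the tabulated value is $g(3,3,\mu)=7$, so the hypothesis $n\geq 7$ is exactly admissibility, and $\delta_2(\cc G)\geq 3=k$ is exactly hypothesis~(i) of Theorem~\ref{mainlong}, which then delivers the spanning Euler tour. For $k\geq 4$ the equality $c=k\geq 4$ yields $k\leq 2k-2=2c-2$, placing us in the branch ``$c\geq 4$, $k\leq 2c-2$'' of the definition of $g$, so $g(c,k,\mu)=\binom{k}{2}+1$ irrespective of $\mu$. With this identified: in clause~(ii) the subcase $\delta_2(\cc G)\geq k$ with $n\geq\binom{k}{2}+1$ is admissibility together with hypothesis~(i) of Theorem~\ref{mainlong}; and in clause~(iii) the bound $n\geq\binom{k}{2}+1$ is admissibility while $\delta_r(\cc G)\geq 1$ with $4\leq r\leq k$ is hypothesis~(iii) of Theorem~\ref{mainlong}; so both follow immediately.

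The one place that is not pure bookkeeping is the second subcase of clause~(ii), where $\cc G$ carries only the weak-looking hypothesis $\delta_3(\cc G)\geq 1$ but the stronger bound $n\geq k^2-3k+5$ on the number of vertices. Here I would check the elementary inequality $k^2-3k+5\geq\binom{k}{2}+1$, i.e.\ $k^2-5k+8\geq 0$, which holds for every integer $k$ because the quadratic $k^2-5k+8$ has negative discriminant $25-32=-7$; hence $n\geq k^2-3k+5$ already implies $(n,k,k,\mu)$-admissibility, and hypothesis~(ii) of Theorem~\ref{mainlong}, which asks for exactly $\delta_3(\cc G)\geq 1$ and $n\geq k^2-3k+5$, applies verbatim. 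In short, there is no genuine obstacle here: one only needs to read the piecewise definition of $g(c,k,\mu)$ correctly (in particular remembering to use the special value $g(3,3,\mu)=7$ rather than a generic formula when $k=3$) and to observe that the $3$-degree hypothesis comes bundled with a vertex count comfortably inside the admissible range.
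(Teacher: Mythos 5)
Your proposal is correct and follows the same route the paper intends: the corollary is exactly the specialization of Theorem~\ref{mainlong} to the uniform case $c=k$, where $g(3,3,\mu)=7$ and $g(k,k,\mu)=\binom{k}{2}+1$ for $k\geq 4$ (the $\mu$-dependent branch $k>2c$ never occurs), so each clause reduces to an admissibility check. Your verification that $k^{2}-3k+5\geq\binom{k}{2}+1$ (equivalently $k^{2}-5k+8\geq 0$, which has negative discriminant) handles the only non-immediate case, so nothing is missing.
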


Theorem \ref{mainlong} settles the problem of finding universal cycles of rank two for the vast majority of (not necessarily uniform) designs; The only gap is for 2-designs whose index $\lambda$ is less than the largest block size. 
Previously it was unknown if all Steiner quadruple systems are Eulerian  \cite{MR3145634}. Up to isomorphism, there is only one Steiner quadruple system of order 8 \cite{Barrau} (which has a spanning Euler tour), and the Steiner quadruple systems of order 2 and 4 are not Eulerian, hence we have the following by noting that $n\equiv 2, 4 \pmod {6}$ is a necessary condition for the existence of Steiner quadruple systems. 
\begin{corollary}
  A Steiner quadruple system of order $n$ has a spanning  Euler tour if and only if $n\geq 8$ and $n\equiv 2,4 \pmod 6$.  
\end{corollary}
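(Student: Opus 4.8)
The plan is to obtain the characterization by combining an elementary treatment of the necessity direction and of the non-admissible small orders with a direct application of Theorem~\ref{mainlong}(ii), handling the single order $n=8$ separately since it falls just short of the numerical hypothesis of that clause.

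\emph{Necessity.} A Steiner quadruple system $S(3,4,n)$ exists only if $n\equiv 2,4\pmod 6$, so this congruence is forced. The two smallest admissible orders are $n=2$, whose system has no blocks, and $n=4$, whose unique system is the single block $\{1,2,3,4\}$. In both cases there is no spanning Euler tour: an Euler tour traversing exactly one edge would be a closed walk of length one, i.e.\ $v_1,e_1$ with $v_1\neq v_2=v_1$, which is impossible (equivalently, the lone edge of $S(3,4,4)$ is a strong cut edge, so by \cite[Lemma 2]{MR3691547} the system is not even quasi-Eulerian). Hence a spanning Euler tour forces $n\geq 8$.

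\emph{Sufficiency.} Let $\cG$ be an $S(3,4,n)$ with $n\geq 8$ and $n\equiv 2,4\pmod 6$, and first suppose $n\geq 10$ (among admissible orders, $n\geq 9$ already forces $n\geq 10$). Since $\cG$ is $4$-uniform with distinct blocks, $\crk(\cG)=\rk(\cG)=4$ and $\mu(\cG)=1$; as $g(4,4,1)=\binom{4}{2}+1=7\leq n$, the system $\cG$ is $(n,4,4,1)$-admissible. Moreover $\delta_3(\cG)=1$, since every $3$-subset of vertices lies in exactly one block, and $n\geq 10\geq 4^2-3\cdot4+5=9$. Theorem~\ref{mainlong}(ii) then produces a spanning Euler tour in $\cG$.

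It remains to treat $n=8$. Here $\delta_2(\cG)=(8-2)/2=3<4=\rk(\cG)$ and $8<9$, and $\delta_4(\cG)=0$ since not every $4$-subset is a block, so none of clauses (i)--(iii) of Theorem~\ref{mainlong} is available; this is the one genuinely exceptional value. I would settle it by hand, using the fact that $S(3,4,8)$ is unique up to isomorphism \cite{Barrau} --- the $14$ affine planes of $AG(3,2)$ --- and exhibiting an explicit spanning Euler tour, namely a closed walk of length $14$ visiting all $8$ points. Together with the previous cases and the necessity argument this yields the stated equivalence. The only real obstacle is precisely the order $n=8$: it is the unique order meeting the necessary condition for which the general degree hypotheses of Theorem~\ref{mainlong} break down, so it must be verified directly against the classification of $S(3,4,8)$.
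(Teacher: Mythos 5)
Your proposal is correct and follows essentially the same route as the paper: necessity of $n\equiv 2,4\pmod 6$ together with the exclusion of $n\in\{2,4\}$, an application of Theorem~\ref{mainlong}(ii) (via $(n,4,4,1)$-admissibility and $\delta_3=1$) for all admissible $n\geq 10$, and the exceptional order $n=8$ handled through the uniqueness of $S(3,4,8)$ \cite{Barrau}. Like the paper, you defer the order-$8$ case to a direct verification of a spanning Euler tour in that unique system rather than exhibiting it, so the two arguments coincide in substance.
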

More generally, we have the following. 
\begin{corollary}
  Any  Steiner system $S(t,k,v)$ has a spanning  Euler tour for  $t=3, k\geq 4, n\geq k^2-3k+5$, and $t\geq 4, k\geq 5, n\geq \binom{k}{2}+1$.
\end{corollary}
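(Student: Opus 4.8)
The plan is to obtain this as a direct consequence of Theorem~\ref{mainlong}, since a Steiner system is exactly a uniform hypergraph with a prescribed small $t$-degree; essentially all the work is identifying the parameters and checking admissibility. Let $\cG$ be a Steiner system $S(t,k,v)$. By definition $\cG$ is $v$-vertex and $k$-uniform, so $\crk(\cG)=\rk(\cG)=k$; and since $t\ge 3$ in both cases under consideration, two coinciding blocks would force some $t$-subset to have degree at least $2$, contradicting $\dg_\cG(T)=1$, so $\mu(\cG)=1$. Thus $\cG$ is $k$-uniform with $\mu(\cG)=1$ and $\crk(\cG)=k\ge 4\ge 3$, and to invoke Theorem~\ref{mainlong} I would only need $(v,k,k,1)$-admissibility, i.e.\ $v\ge g(k,k,1)$. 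With $c=k=\rk(\cG)\ge 4$ we are in the branch $c\ge 4$, $k\le 2c-2$ of the definition of $g$ (valid since $k\le 2k-2$ for $k\ge 2$), whence $g(k,k,1)=\binom{k}{2}+1$.

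For the case $t=3$, $k\ge 4$, I would apply part~(ii) of Theorem~\ref{mainlong}: here $\delta_3(\cG)=1$, and the stated hypothesis $v\ge k^2-3k+5$ is precisely the extra requirement of~(ii), while admissibility $v\ge\binom{k}{2}+1$ follows from it because $k^2-3k+5-\bigl(\binom{k}{2}+1\bigr)=\tfrac12(k^2-5k+8)>0$ for every $k$ (the quadratic $k^2-5k+8$ has negative discriminant). Hence~(ii) yields a spanning Euler tour.

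For the case $t\ge 4$, $k\ge 5$, I would apply part~(iii) with $r:=t$. Since a Steiner system has $t\le k$, we get $4\le r\le k$, and $\delta_r(\cG)=\delta_t(\cG)=1\ge 1$; the stated hypothesis $v\ge\binom{k}{2}+1$ is exactly $(v,k,k,1)$-admissibility, so Theorem~\ref{mainlong}(iii) gives a spanning Euler tour. (Alternatively one could take $r=4$, using that every $4$-subset extends to a $t$-subset, which is contained in a block, so $\delta_4(\cG)\ge 1$.)

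The only point that needs care is the admissibility bookkeeping --- selecting the correct branch of $g(c,k,\mu)$ with $c=k$, and confirming that the given lower bounds on $v$ dominate $\binom{k}{2}+1$ --- but this is routine arithmetic, so I do not expect any genuine obstacle beyond invoking Theorem~\ref{mainlong} with the appropriate parameters.
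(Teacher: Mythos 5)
Your proof is correct and is essentially the paper's intended derivation: the corollary is meant to follow directly from Theorem~\ref{mainlong} by observing that a Steiner system with $t\ge 3$ is a simple $k$-uniform hypergraph (so $c=k$, $\mu=1$, and $g(k,k,1)=\binom{k}{2}+1$), applying part~(ii) when $t=3$ and part~(iii) with $r=t$ (or $r=4$) when $t\ge 4$. Your admissibility bookkeeping, including the check that $k^2-3k+5>\binom{k}{2}+1$, is exactly the routine verification the paper leaves implicit.
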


Regarding Conjecture \ref{conjdewar}, Theorem \ref{mainlong} implies the following. 
\begin{corollary} \label{dewstevconjcor}
    Every $(v,K,\lambda)$-PBD with $\min K\geq 3$, $\lambda\geq \max K$ and a sufficient number of points admits a (spanning) universal cycle of rank two. 
\end{corollary}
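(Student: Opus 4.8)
The plan is to derive this immediately from Theorem~\ref{mainlong}(i). Let $\cc G$ be a $(v,K,\lambda)$-PBD with $\min K\geq 3$ and $\lambda\geq \max K$, and set $c:=\crk(\cc G)$ and $k:=\rk(\cc G)$, so that every edge has size in $K$ gives $3\leq \min K\leq c\leq k\leq \max K$. By definition of a PBD every $2$-subset of $V(\cc G)$ lies in exactly $\lambda$ edges, so $\delta_2(\cc G)=\Delta_2(\cc G)=\lambda$; in particular $\delta_2(\cc G)=\lambda\geq \max K\geq k$, which is precisely hypothesis (i) of Theorem~\ref{mainlong} (and $\delta_2(\cc G)\geq 1$ also forces $\cc G$ to be connected). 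So the only thing left to check is $(v,c,k,\mu)$-admissibility, i.e.\ $v\geq g(c,k,\mu)$, where $\mu:=\mu(\cc G)$.

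To handle admissibility I would first bound the multiplicity: if a block $B$ (necessarily with $|B|\geq \min K\geq 3$) is repeated $m$ times in $\cc G$, then every pair inside $B$ has degree at least $m$, so $m\leq \lambda$; hence $\mu\leq\lambda$. Next, inspecting the five cases defining $g$, the only one in which $g(c,k,\cdot)$ is not constant is $k>2c$, where the coefficient of $\mu$ is $2(k-1)\bigl(2^{k-c-1}-2^{c}+1\bigr)$; since $k>2c$ forces $k-c-1\geq c$ (as $k\geq 2c+1$), we get $2^{k-c-1}-2^{c}+1\geq 1$, so this coefficient is positive and $g(c,k,\cdot)$ is nondecreasing. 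Therefore $g(c,k,\mu)\leq g(c,k,\lambda)$, and because $c,k$ lie in the bounded range $[\min K,\max K]$, the quantity
\[
  g_{0}(K,\lambda):=\max\bigl\{\,g(c,k,\lambda)\;:\;\min K\leq c\leq k\leq \max K\,\bigr\}
\]
depends only on $K$ and $\lambda$. Thus ``a sufficient number of points'' should be read as $v\geq g_{0}(K,\lambda)$, which guarantees $v\geq g(c,k,\mu)$ and hence that $\cc G$ is $(v,c,k,\mu)$-admissible.

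With both hypotheses in place, Theorem~\ref{mainlong}(i) produces a spanning Euler tour of $\cc G$; since an Euler tour in a design is by definition a universal cycle of rank two, and the tour traverses every point, $\cc G$ admits a spanning universal cycle of rank two. The construction is polynomial because the one underlying Theorem~\ref{mainlong} is. I do not expect a genuine obstacle here: the corollary is essentially a specialization of Theorem~\ref{mainlong}, and the only steps requiring care are the bound $\mu\leq\lambda$ together with the monotonicity of $g$ in its last argument (so that the admissibility threshold is controlled by the fixed data $K,\lambda$ rather than by the a priori unknown $\cc G$), and the purely terminological identification of Euler tours with universal cycles of rank two.
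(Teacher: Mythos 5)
Your proof is correct and follows the same route as the paper: the corollary is a direct specialization of Theorem~\ref{mainlong}(i), since a $(v,K,\lambda)$-PBD with $\lambda\geq\max K$ has $\delta_2=\lambda\geq k$, and ``sufficiently many points'' absorbs the admissibility requirement $v\geq g(c,k,\mu)$. Your additional checks (that $\mu\leq\lambda$ and that $g$ is nondecreasing in $\mu$, so the threshold depends only on $K$ and $\lambda$) are correct and merely make explicit what the paper leaves implicit.
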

\noindent Dewar and Stevens  made a similar conjecture for BIBDs \cite[Conjecture 5.8]{MR2963059}, but since every BIBD is a PBD, Corollary \ref{dewstevconjcor} addresses the second conjecture as well.

Last but not least, for the vast majority of designs, it was  unknown if the corresponding block-intersection graph is Hamiltonian. Most of the literature is focused on 2-designs \cite{MR1140783,MR1079733,MR2378924,MR1312454,MR3754043}, and in fact we are aware of only one result for $t$-designs with $t\geq 3$ (The Hamiltonicity of the block intersection graph of 
$t$-$(v,t+1,\lambda)$ designs was proven in \cite{MR2831806}). Theorem \ref{mainlong} establishes the Hamiltonicity of  the block-intersection graph of a large family of (not necessarily uniform) designs (Again, it leaves the small gap where $t=2$ and $\lambda < \max K$). Since deciding if a graph is Hamiltonian is NP-complete, it is worth mentioning that Abueida and Pike showed that finding Hamiltonian cycles in   block intersection graph of  a BIBD can be done in polynomial time \cite{MR3055152}. All our results on Euler families and spanning Euler tours (and consequently, on Hamiltonicity of the  line graph of the hypergraph under consideration) are constructive and are of 
polynomial time complexity.

In Section~\ref{term},  we provide the  prerequisites. In Sections \ref{quasisec}, and \ref{spaneulersecr2}, 
we prove Theorems \ref{quasimainthm}, and \ref{mainlong}, respectively. Our proofs rely on discharging method and factor theory. The lower bounds on $n$ are chosen so as to simplify the calculations, and could be improved by a more careful analysis. 

\section{Preliminaries} \label{term}
Let $r\in \mathbb{N}$, and let $G=(V,E)$ be a graph with $x\in V, A,B\subseteq V$. Then $N_G(A)$ denotes the set of vertices having a neighbor in $A$, abbreviate $N_G(\{x\})$ to $N_G(x)$. The graph $G$ is {\it even} if all its vertices have even degree, and  is $(A,r)$-regular if $\dg_G(x)=r$ for each $x\in A$.  The graph $G-A$ is obtained by removing $A$ and all the edges incident with $A$ from $G$. Finally,  if $A\cap B=\emptyset$, $\eps_G(A,B)$ denotes the number of edges with one endpoint in $A$ and the other endpoint in $B$, abbreviate $\eps_G(A, \{x\})$ to $\eps_G(A,x)$, and $\eps_G(A,W)$ to $\eps_G(A,H)$ where $G\supseteq H:=(W,F)$.  
A bipartite graph $G$ with bipartition $\{X, Y \}$ will be denoted by $G[X, Y ]$, and for $A\subseteq X,B\subseteq Y$, denote $\overline A=X\backslash A, \overline B=Y\backslash A$.   The {\it incidence graph} $\mathscr I(\cG)$ of a hypergraph  $\cG=(V,E)$ is a bipartite  graph with bipartition $\{ V, E \}$ and edge set $\{ ve: v\in V, e \in E, v \in e\}$. Although $\cc G$ may have multiple edges, its incidence graph is always simple.  
An Euler family in $\cG$ corresponds to an even $(E,2)$-regular subgraph of $\mathscr I(\cG)$, and an Euler tour corresponds to a connected even $(E,2)$-regular subgraph of $\mathscr I(\cG)$. 

\subsection{Parity $(g,f)$-factors and Barriers} \label{prereq}
Let $G=(V,E)$ be a graph. For $U\subseteq W\subseteq V$, a component $C$ of $G- W$ is  {\it $U$-odd} if $\eps_G(U,C)$ is odd, and  a component that is not $U$-odd   is {\it 
$U$-even}.    
For $G[X,Y]$, and  $S\subseteq X$, $T\subseteq X\cup Y$ with $S\cap T=\varnothing$, let 
$$
\delta(S,T)=2|S|+\sum\nolimits_{v\in T}\dg_{G-S}(v)-2|T\cap X|-q(S,T),
$$
where $q(S,T)$ is the number of $T$-odd components of $G- (S\cup T)$. If $\delta(S,T)<0$, then  $(S,T)$  is a {\it barrier} of {\it size} $|S\cup T|$ in $G$, and a {\it minimum barrier} is a barrier whose size is as small as possible. Observe that if $(S,T)$  is a barrier, then $T\neq \varnothing$.  We say that a vertex $v\in T$ is {\it adjacent} in $G$ to a component of $G-(S\cup T)$ if $v$ is adjacent in $G$ to a vertex from the component. 
\begin{lemma} \label{easyprop}
For  $G[X,Y]$, and    $S\subseteq X$, $T\subseteq X\cup Y$ with $S\cap T=\varnothing$, $\delta(S,T)$ is  always even. 
\end{lemma}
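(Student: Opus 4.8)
The plan is to reduce everything modulo $2$ and track the parity of each of the four terms in
$$
\delta(S,T)=2|S|+\sum\nolimits_{v\in T}\dg_{G-S}(v)-2|T\cap X|-q(S,T).
$$
The first term $2|S|$ and the third term $2|T\cap X|$ are manifestly even, so modulo $2$ we have $\delta(S,T)\equiv \sum_{v\in T}\dg_{G-S}(v) - q(S,T)$. Hence it suffices to show that $\sum_{v\in T}\dg_{G-S}(v)$ and $q(S,T)$ have the same parity. First I would partition the edges of $G-S$ incident with $T$ into three types according to where the other endpoint lies: (a) both endpoints in $T$; (b) one endpoint in $T$ and the other in $S$ — but these are killed by passing to $G-S$, so they do not occur; (c) one endpoint in $T$ and the other in a component $C$ of $G-(S\cup T)$. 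Thus $\sum_{v\in T}\dg_{G-S}(v) = 2\,\epsilon_{G-S}(T,T) + \sum_{C}\epsilon_{G-S}(T,C)$, where the sum runs over the components $C$ of $G-(S\cup T)$ and $\epsilon_{G-S}(T,T)$ counts edges of $G-S$ with both ends in $T$.

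The term $2\,\epsilon_{G-S}(T,T)$ is even, so modulo $2$ we get $\sum_{v\in T}\dg_{G-S}(v) \equiv \sum_{C}\epsilon_{G-S}(T,C)$. Now here is the point where the bipartite structure enters: for a component $C$ of $G-(S\cup T)$, the quantity $\epsilon_{G-S}(T,C)$ is exactly $\epsilon_G(T,C)$ (no edges of $G$ between $T$ and $C$ were incident with $S$), and by definition $C$ is $T$-odd precisely when $\epsilon_G(T,C)$ is odd. Therefore each $T$-odd component contributes $1$ and each $T$-even component contributes $0$ to $\sum_C \epsilon_{G-S}(T,C) \pmod 2$, so this sum is congruent to the number of $T$-odd components, which is $q(S,T)$. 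Combining, $\sum_{v\in T}\dg_{G-S}(v)\equiv q(S,T)\pmod 2$, and hence $\delta(S,T)$ is even.

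I do not anticipate a genuine obstacle here — the lemma is a parity bookkeeping statement — but the one place to be careful is the handling of edges between $T$ and $S$: one must make sure that by working in $G-S$ those edges are genuinely removed, so that $\dg_{G-S}(v)$ for $v\in T$ only sees edges within $T$ or from $T$ into the components of $G-(S\cup T)$. (Note $T$ may meet both $X$ and $Y$, and edges with both ends in $T$ necessarily go between $T\cap X$ and $T\cap Y$ by bipartiteness, but this does not affect the count since such edges are counted twice in the degree sum regardless.) A secondary subtlety is that the definition of $U$-odd (with $U=T$) was phrased for $U\subseteq W$ with $W=S\cup T$; one should note $T\subseteq S\cup T$ trivially, so the notion of ``$T$-odd component of $G-(S\cup T)$'' is exactly the instance $U=T$, $W=S\cup T$ of the earlier definition, and $\epsilon_G(T,C)=\epsilon_{G-S}(T,C)$ as observed.
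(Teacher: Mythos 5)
Your proposal is correct and follows essentially the same route as the paper's proof: both reduce $\delta(S,T)$ modulo $2$, split $\sum_{v\in T}\dg_{G-S}(v)$ into twice the number of edges of $G-S$ with both ends in $T$ plus the edges from $T$ into the components of $G-(S\cup T)$, and then observe that each component contributes its $T$-parity, so the sum is congruent to $q(S,T)$. Your extra remarks (that edges into $S$ are removed in $G-S$ and that $\varepsilon_{G-S}(T,C)=\varepsilon_G(T,C)$) are accurate but routine; no gap.
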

\begin{proof} Let $f$ be the number of edges in $G-S$ whose both endpoints are in $T$. 
Let $C_1,\dots,C_k$ be the components of $G-(S\cup T)$. For $i\in [k]$, define $\delta_{C_i}=1$ if $C_i$
is $T$-odd, and  $\delta_{C_i}=0$ if $C_i$ is $T$-even. Since $\eps_G(T,C_i)-\delta_{C_i}\equiv 0 \pmod 2$ for  $i\in [k]$,  we have 
\begin{align*}
\delta(S,T)&\equiv\sum\nolimits_{v\in T}\dg_{G-S}(v)-q(S,T)\\
& =2f+\sum\nolimits_{i\in [k]} \eps_G(T,C_i)-q(S,T)\\
&\equiv \sum\nolimits_{i\in [k]} (\eps_G(T,C_i)-\delta_{C_i}) \equiv 0 \pmod 2.
\end{align*}
\end{proof}

Using  Lovasz's parity $(g,f)$-factor Theorem \cite{MR325464}, Bahmanian and \v Sajna proved the following result. 
\begin{theorem}\textup{(\cite[Corollary 6.2]{MR3691547})}\label{parity-factor}
Let $\cG=(V,E)$ be a hypergraph and $G[X,Y]=\mathscr I(\cG)$ with $X:=E,Y:=V$. Then $\cG$ is quasi-eulerian if and only if  all  $S \subseteq X$ and $T\subseteq X \cup Y$ with $S\cap T=\varnothing$ satisfy $\delta(S,T)\ge 0$. 
\end{theorem}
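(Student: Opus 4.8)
The plan is to translate the existence of an Euler family into a parity $(g,f)$-factor problem on the incidence graph and then read off the barrier condition $\delta(S,T)\ge 0$ from Lovász's theorem \cite{MR325464}. As recorded in Section~\ref{term}, an Euler family of $\cG$ is the same thing as a spanning subgraph $H$ of $G=\mathscr I(\cG)$ that is even and $(E,2)$-regular; equivalently, $\dg_H(e)=2$ for every $e\in X:=E$ and $\dg_H(v)$ is even for every $v\in Y:=V$. This is precisely a parity $(g,f)$-factor with $g(e)=f(e)=2$ on $X$ and $g(v)=0$, $f(v)=2\rounddown{\dg_G(v)/2}$ on $Y$; since $g\le f$ and $g\equiv f\pmod 2$ throughout, Lovász's theorem applies and says that $H$ exists if and only if
$$
\gamma(S,T):=\sum\nolimits_{v\in S}f(v)-\sum\nolimits_{v\in T}g(v)+\sum\nolimits_{v\in T}\dg_{G-S}(v)-q_L(S,T)\ge 0
$$
for all disjoint $S,T\subseteq X\cup Y$, where $q_L(S,T)$ counts the components $C$ of $G-(S\cup T)$ on which $g\equiv f$ and for which $\sum_{v\in C}f(v)+\eps_G(T,C)$ is odd.

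The first observation I would record is that on any component $C$ with $g\equiv f$ every $Y$-vertex of $C$ has $f=0$ while every $X$-vertex contributes $2$, so $\sum_{v\in C}f(v)$ is even and the Lovász parity condition collapses to ``$\eps_G(T,C)$ is odd'', i.e.\ to $C$ being $T$-odd. Hence every component counted by $q_L$ is $T$-odd, giving $q_L(S,T)\le q(S,T)$; and when $S\subseteq X$ we have $\sum_{v\in S}f(v)=2|S|$ and $\sum_{v\in T}g(v)=2|T\cap X|$, so $\gamma(S,T)=\delta(S,T)+\bigl(q(S,T)-q_L(S,T)\bigr)\ge\delta(S,T)$.

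For necessity (quasi-Eulerian $\Rightarrow$ $\delta\ge 0$) I would argue directly from a factor $H$, sidestepping the $g\equiv f$ restriction entirely. Fix $S\subseteq X$ and $T$, let $C_1,\dots,C_r$ be the components of $G-(S\cup T)$, and for each $i$ let $b_i$ be the number of edges of $H$ from $S$ to $C_i$ and $s_i=\eps_G(T,C_i)-\eps_H(T,C_i)\ge 0$ the number of $T$--$C_i$ edges missed by $H$. A short double count of the edges of $H$ at $S$ (each of the $|S|$ vertices has degree $2$) and at $T$ (degree $2$ on $T\cap X$, even on $T\cap Y$), substituted into the definition of $\delta$, collapses to an identity $\delta(S,T)=\sum_i(b_i+s_i)+R-q(S,T)$ with $R\ge 0$. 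The crucial point is that every vertex of $C_i$ has even $H$-degree, whence $\eps_H(T,C_i)\equiv b_i\pmod 2$; so if $C_i$ is $T$-odd then $\eps_G(T,C_i)$ is odd and a parity check forces $b_i\ge 1$ or $s_i\ge 1$. Thus each of the $q(S,T)$ $T$-odd components contributes at least $1$ to $\sum_i(b_i+s_i)$, giving $\delta(S,T)\ge 0$. This is exactly where the broad definition of $q$ (all $T$-odd components, not merely the $g\equiv f$ ones) is vindicated.

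Sufficiency ($\delta\ge 0$ for all $S\subseteq X$ $\Rightarrow$ quasi-Eulerian) is where the real work lies, and I expect the reduction step to be the main obstacle. By the inequality above, $\gamma\ge\delta\ge 0$ whenever $S\subseteq X$, so by Lovász's criterion it remains only to show that the minimum of $\gamma$ over all disjoint pairs is attained at some pair with $S\cap Y=\varnothing$; equivalently, that a $Y$-vertex may be deleted from $S$ without increasing $\gamma$. Deleting such a $v$ from $S$ lowers $\sum_{v\in S}f(v)$ by $f(v)$ but raises $\sum_{v\in T}\dg_{G-S}(v)$ by $\eps_G(v,T)$ and perturbs the odd-component count, and the delicate case is a vertex of odd $G$-degree, where $f(v)=\dg_G(v)-1$ rather than $\dg_G(v)$. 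Here I would track the merging of components caused by reinstating $v$ (which can only fuse several $g\equiv f$ components into one non-$(g\equiv f)$ component, hence can only lower $q_L$) and combine this with parity considerations, notably that $\delta$ is always even (Lemma \ref{easyprop}), to rule out a strict increase. Once $\gamma\ge 0$ is secured for all disjoint $S,T$, Lovász's theorem produces the factor $H$, and the correspondence of Section~\ref{term} converts it back into an Euler family, completing the equivalence.
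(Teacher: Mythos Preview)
The paper does not prove Theorem~\ref{parity-factor}; it is quoted as Corollary~6.2 of \cite{MR3691547} and used purely as a tool in Sections~\ref{quasisec} and~\ref{spaneulersecr2}. There is therefore no in-paper argument to compare your proposal against.

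For the record, your outline is exactly the route taken in \cite{MR3691547}: recast ``even $(E,2)$-regular subgraph of $\mathscr I(\cG)$'' as a parity $(g,f)$-factor with $g=f=2$ on $X$ and $g=0$, $f$ the largest even integer at most the degree on $Y$, then specialize Lov\'asz's deficiency formula \cite{MR325464}. Your necessity argument via a direct count on a factor $H$ is fine. Your sufficiency step---showing that a minimizer of $\gamma$ may be taken with $S\cap Y=\varnothing$---is the part that \cite{MR3691547} actually works out, and as written here it remains only a sketch: the sentence ``combine this with parity considerations \dots\ to rule out a strict increase'' hides the case analysis (in particular, when a reinstated $Y$-vertex of odd degree merges several components, the change in $q_L$ must be traded off against the loss of $f(v)=\dg_G(v)-1$, and one needs the precise Lov\'asz count rather than the coarser $q$). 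The intended manipulation is correct, but you should not regard this step as done.
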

We need the following very useful lemma. 
\begin{lemma}\label{minimum-barrier}
If $(S,T)$ is a minimum barrier of $G[X,Y]$, then 
\begin{enumerate}[(i)]
\item [\textup{(i)}] $T\subseteq X$;
\item [\textup{(ii)}] For each $T$-even component   $C$  of $G-(S\cup T)$,   $\eps_G(T,C)=0$;  
\item [\textup{(iii)}] For each   $T$-odd component $C$ of $G-(S\cup T)$ and $u\in T$, $\eps_G(u,C)\leq 1$;
\item [\textup{(iv)}] If $O_1,\dots,O_{q}$ are the $T$-odd components of $G- (S\cup T)$ where $q:=q(S,T)$, then
\begin{equation*}
|T|-|S|>\frac{1}{2} \sum\nolimits_{i\in [q]}\Big(\eps(T,O_i)-1\Big).
\end{equation*}
\end{enumerate}
 \end{lemma}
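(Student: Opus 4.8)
The plan is to show that a minimum barrier $(S,T)$ must be ``tight'' in several directions; all four parts will follow from the single principle that if $(S,T)$ is a barrier and one can produce another pair $(S',T')$ with $\delta(S',T')<0$ and $|S'\cup T'|<|S\cup T|$, then $(S,T)$ was not minimum after all. Since by Lemma~\ref{easyprop} the quantity $\delta$ is always even, the inequality $\delta(S,T)<0$ is equivalent to $\delta(S,T)\le -2$, which is the slack we will repeatedly exploit.

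\textbf{(i).} First I would show $T\subseteq X$. Suppose $y\in T\cap Y$, and consider $T':=T\setminus\{y\}$. Removing $y$ from $T$ changes $\delta$ in three ways: the sum $\sum_{v\in T}\dg_{G-S}(v)$ drops by $\dg_{G-S}(y)$; the term $-2|T\cap X|$ is unaffected since $y\in Y$; and $y$, now living in $G-(S\cup T')$, gets attached to the components it was adjacent to (possibly merging some of them and creating one new component containing $y$), so $q$ changes in a controlled way. The key bookkeeping observation is that each of the $\dg_{G-S}(y)$ edges at $y$ either goes to $T'$ (and there are, say, $a$ such edges) or into a component of $G-(S\cup T')$; reabsorbing $y$ can decrease the count of $T$-odd components by at most the number of components $y$ touches, and one checks that $\delta(S,T')\le \delta(S,T)+(\text{something}\le 0)$, so $(S,T')$ is still a barrier but of strictly smaller size, contradicting minimality. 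The honest version of this computation is the one genuine calculation in the lemma; I would do it carefully by splitting the edges at $y$ according to whether their other endpoint is in $T'$, in a $T$-odd component, or in a $T$-even component, and tracking the parity of each affected component after $y$ is reinserted.

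\textbf{(ii), (iii), (iv).} These are comparatively short. For (ii): if $C$ is a $T$-even component with $\eps_G(T,C)>0$, this number is even and at least $2$; I would move $C$ into the ``$S$-side'' conceptually — more precisely, since $C$ contributes nothing to $q$ but its edges to $T$ do not help $\delta$ either, one shows that deleting the vertices of $C$ from consideration (they are neither in $S$, $T$, nor needed) cannot raise $\delta$, and in fact one can absorb a vertex of $C$ adjacent to $T$ into $T$ or $S$ to strictly decrease size while keeping $\delta<0$; the cleanest route is to observe that the edges from $T$ to such a $C$ are simply wasted and a smaller barrier exists. For (iii): if some $u\in T$ has $\eps_G(u,C)\ge 2$ for a $T$-odd component $C$, then replacing two of those parallel-looking edges' effect — again a local surgery moving a vertex of $C$ into $T$ — reduces $|T|$'s ``efficiency'' and yields a smaller barrier; concretely one shows $\sum_{v\in T}\dg_{G-S}(v)$ is larger than necessary relative to the odd-component count, contradicting minimality of size. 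For (iv): rearrange the definition of $\delta$. We have
\begin{align*}
\delta(S,T)=2|S|+\sum\nolimits_{v\in T}\dg_{G-S}(v)-2|T|-q(S,T),
\end{align*}
using $T\subseteq X$ from (i) so $|T\cap X|=|T|$. Now $\sum_{v\in T}\dg_{G-S}(v)$ counts all edges from $T$ into $G-S$; splitting these into edges within $T$ (counted twice, contributing $2f\ge 0$) and edges into the components, and using parts (ii) and (iii) to discard the $T$-even ones and bound each $\eps_G(u,O_i)\le 1$, one gets $\sum_{v\in T}\dg_{G-S}(v)\ge \sum_{i\in[q]}\eps_G(T,O_i)$. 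Plugging into $\delta(S,T)<0$ and isolating gives exactly
\begin{align*}
|T|-|S|>\tfrac12\sum\nolimits_{i\in[q]}\big(\eps_G(T,O_i)-1\big),
\end{align*}
after rewriting $q=\sum_{i\in[q]}1$.

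\textbf{Main obstacle.} The one place that needs real care is part (i): correctly accounting for how $q(S,T)$ changes when a vertex $y\in Y$ is removed from $T$ and reinserted into the graph, because $y$ may merge several components and may flip the parity of the merged component. I would handle this by a careful case split on the edges incident to $y$, keeping the two facts $\delta$ is even (Lemma~\ref{easyprop}) and hence gives slack $-2$, and $y$ can be adjacent to at most $\dg_{G-S}(y)$ components, so the drop in $q$ is at most $\dg_{G-S}(y)-1$ plus possibly one more for a parity flip — which is exactly compensated by the loss of $\dg_{G-S}(y)$ from the degree sum. Parts (ii)–(iv) are then routine given (i).
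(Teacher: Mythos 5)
Your part (i) and part (iv) are sound, and in fact (i) is the paper's own computation in a slightly different packaging: the paper fixes $u\in T$, notes that by minimality the strictly smaller pair $(S,T\setminus\{u\})$ must satisfy $\delta(S,T\setminus\{u\})\ge 0$, and compares this with $\delta(S,T)\le -2$; you instead show directly that deleting $y\in T\cap Y$ cannot increase $\delta$ (since the drop in $q$ is at most the number of components adjacent to $y$, which is at most $\dg_{G-S}(y)$), so $(S,T\setminus\{y\})$ is a smaller barrier. These are equivalent bookkeeping arguments, and your derivation of (iv) from (i) and the definition of $\delta$ is the same rearrangement as in the paper.

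The genuine gap is in (ii) and (iii). The surgeries you propose there go in the wrong direction: ``absorbing a vertex of $C$ adjacent to $T$ into $T$ or $S$'' or ``moving a vertex of $C$ into $T$'' \emph{enlarges} $S\cup T$, so producing such a pair with $\delta<0$ contradicts nothing — a minimum barrier only forbids \emph{smaller} barriers. Likewise, observing that edges from $T$ to a $T$-even component are ``wasted'' or that $\sum_{v\in T}\dg_{G-S}(v)$ is ``larger than necessary'' is not a contradiction: a barrier with slack is still a barrier. What actually proves (ii) and (iii) is the very same single-vertex-removal computation you did for (i), now applied to $u\in T\subseteq X$: removing $u$ gains $+2$ from the term $-2|T\cap X|$, so minimality ($\delta(S,T\setminus\{u\})\ge 0$) together with $\delta(S,T)\le -2$ (Lemma~\ref{easyprop}) forces $\dg_{G-S}(u)\le \kappa$, where $\kappa$ is the number of $T$-odd components adjacent to $u$; since trivially $\dg_{G-S}(u)\ge \kappa$ (and $T$ is independent because $T\subseteq X$), equality holds, so every edge of $u$ in $G-S$ goes to a distinct $T$-odd component — which is exactly (ii) and (iii). (Equivalently, in your phrasing: if $u$ had an edge to a $T$-even component or two edges to one $T$-odd component, then $\kappa\le \dg_{G-S}(u)-1$ and $\delta(S,T\setminus\{u\})\le \delta(S,T)+1<0$, a smaller barrier.) As written, your (ii)/(iii) do not go through, and they are not ``routine given (i)'' without this extra step; this is precisely where the $X$/$Y$ asymmetry and the evenness slack are used.
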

\begin{proof}
Let us fix $u\in T$, and let $T^*=T\backslash\{u\}$. It is clear that
\begin{enumerate}  
\item [(a)]
$\sum\nolimits_{v\in T^*}\dg_{G-S}(v)=\sum\nolimits_{v\in T}\dg_{G-S}(v)-\dg_{G-S}(u)$, and 
\item  [(b)]
$|T^*\cap X|=|T\cap X|-1$ if $u\in X$ and $|T^*\cap X|=|T\cap X|$ if $u\in Y$.  
\end{enumerate}
Moreover, if  $\kappa$ is the number of odd components of $G-(S\cup T)$ that are adjacent to $u$ in $G$, then 
\begin{enumerate}  
\item [(c)] $\dg_{G-S}(u)\ge \kappa$, and 
\item  [(d)] $q(S,T)\in \{q(S,T^*)+\kappa, q(S,T^*)+\kappa-1 \}$.  
\end{enumerate}
Since $(S,T)$ is a minimum barrier, by (a), (b), and (d) we have  
\begin{align*}
0& \leq \delta(S,T^*)=2|S|+\sum\nolimits_{v\in T^*}\dg_{G-S}(v)-2|T^*\cap X|-q(S,T^*)\\ 
&\leq  \delta(S,T)-\dg_{G-S}(u)+\kappa+
\begin{cases}
2 & \text{if}\,\, u\in  X, \\
0  & \text{if}\,\, u\in Y. 
\end{cases}
\end{align*}
Since $(S,T)$ is a barrier, by Lemma \ref{easyprop} we have  $\delta(S,T)\leq -2$. Thus, 
\begin{align*}
\dg_{G-S}(u)\leq \begin{cases}
 \kappa & \text{if}\,\, u\in X, \\
\kappa-2 & \text{if}\,\, u\in Y. 
\end{cases}
\end{align*}
 Combining this with (c) we have  $\kappa \leq \dg_{G-S}(u)\leq \kappa-2$ for $u\in Y$. Therefore,  $T\cap Y=\varnothing$, or equivalently, $T\subseteq X$. Since $X$ is an independent set, so is $T$. This proves (i). Moreover, for $u\in X$ we have  $\dg_{G-S}(u)= \kappa$, which proves (ii) and (iii). By  (i) and (ii), we have  
\begin{equation*}\label{size}
0>\delta(S,T)=2|S|-2|T|+\sum\nolimits_{i\in [q]}\eps(T,O_i)-q,
\end{equation*}
which proves (iv). 
\end{proof}
\subsection{Spanning Trees} \label{twolemmas}
In order to prove Theorem~\ref{mainlong}, we need the following technical lemma. For the purpose of this subsection, a spanning tree $T$ in a bipartite graph $G[X,Y]$ is {\it nice} if $\dg_T(x)\le 2$ for every $x\in X$. Let 
$$
\mu:= \max_{u\in X}|\{v\in X \ |\ N_G(u)=N_G(v)\}|.
$$
\begin{lemma}\label{LEM:Xdegree}
	Let $G[X,Y]$ be a simple bipartite graph. Suppose that $3\leq c\leq \dg_G(x)\leq k$ for each $x\in X$, and $|N_G(y)\cap N_G(z)|\geq k$ for every pair of distinct vertices $y,z\in Y$. We have the following.
 \begin{enumerate}
    \item [\textup{(i)}] $|N_G(y)|\geq \dfrac{ k (|Y|-1)}{k-1}$ for $y\in Y$;   
    \item [\textup{(ii)}] $|N_G(B)\cap N_G(\overline B)| \geq \dfrac{2|B| (|Y|-|B|)}{ k-1}$ for $B\subseteq Y$;  
    \item [\textup{(iii)}] $G$ has a nice spanning tree if $|Y| \ge \mu(k-1)(2^{k-c-1}-2^c+1)$.     
\end{enumerate}
\end{lemma}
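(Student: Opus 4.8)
The plan is to obtain (i) and (ii) by two short double counts, and (iii) by an explicit construction — subdivide a star on $Y$ and hang the leftover vertices of $X$ off it as leaves — with a Hall-type argument supplying the subdivision vertices.

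For (i) I would fix $y\in Y$ and count the pairs $(x,z)$ with $x\in N_G(y)$, $z\in Y\setminus\{y\}$ and $xz\in E(G)$ in two ways: summing over $z$ gives at least $\sum_{z\neq y}|N_G(y)\cap N_G(z)|\geq k(|Y|-1)$ such pairs, while summing over $x$ gives at most $\sum_{x\in N_G(y)}(\dg_G(x)-1)\leq (k-1)|N_G(y)|$, and comparing yields (i). For (ii), with $B\subseteq Y$ (the claim being vacuous when $B=\varnothing$ or $B=Y$) and $\overline B=Y\setminus B$, I would count the triples $(x,y,z)$ with $y\in B$, $z\in\overline B$, $x\in N_G(y)\cap N_G(z)$, noting each such $x$ lies in $N_G(B)\cap N_G(\overline B)$: summing over the $|B|(|Y|-|B|)$ pairs $(y,z)$ gives at least $k|B|(|Y|-|B|)$ triples, while summing over $x$ — with $a=|B\cap N_G(x)|$, $b=|\overline B\cap N_G(x)|$, so $a+b=\dg_G(x)\leq k$ since $N_G(x)\subseteq Y=B\sqcup\overline B$ — gives at most $\sum_x ab\leq \tfrac14 k^2\,|N_G(B)\cap N_G(\overline B)|$. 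Hence $|N_G(B)\cap N_G(\overline B)|\geq 4|B|(|Y|-|B|)/k$, and since $k\geq c\geq 3$ forces $4/k\geq 2/(k-1)$, the stated bound follows.

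For (iii) I would first note that $G$ is connected (any two vertices of $Y$ have a common neighbour, and every $x\in X$ has a neighbour since $\dg_G(x)\geq c\geq 1$) and that $|X|\geq |Y|-1$ by (i). Fix $y_0\in Y$ and let $\cc A$ be the bipartite graph with parts $Y\setminus\{y_0\}$ and $X$ in which $z$ is joined to every $x\in N_G(y_0)\cap N_G(z)$. For $Z\subseteq Y\setminus\{y_0\}$ the $\cc A$-neighbourhood of $Z$ is $N_G(y_0)\cap N_G(Z)$, and counting the edges of $G$ between $Z$ and this set both ways (at least $k$ from each $z\in Z$, at most $\dg_G(x)-1\leq k-1$ from each $x$ in the set) gives $|N_G(y_0)\cap N_G(Z)|\geq \tfrac{k}{k-1}|Z|\geq |Z|$, so Hall's condition holds and $\cc A$ has a matching saturating $Y\setminus\{y_0\}$; call its vertices $x_z\in N_G(y_0)\cap N_G(z)$, which are pairwise distinct. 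Now take the star on $Y$ with centre $y_0$, subdivide each edge $y_0z$ by $x_z$ — obtaining a tree spanning $Y\cup\{x_z:z\neq y_0\}$ in which each $x_z$ has degree $2$ — and finally attach every remaining $x\in X$ as a leaf to some vertex of $N_G(x)\cap Y$. The outcome is a spanning tree $T$ of $G$ with $\dg_T(x)\in\{1,2\}$ for all $x\in X$, i.e.\ a nice spanning tree. (This argument needs no lower bound on $|Y|$, so the hypothesis $|Y|\geq\mu(k-1)(2^{k-c-1}-2^c+1)$ is far more than it requires.)

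I expect the double counts in (i), (ii) and the Hall verification in (iii) to be entirely routine; the only real decision is the choice of auxiliary structure in (iii). Using a star — rather than an arbitrary spanning tree of the complete graph on $Y$, which would force one to juggle (i) and (ii) to push Hall's condition through for all subsets of tree edges — collapses the matching condition to the one-line estimate above, and taking the matching into $X$ is exactly what guarantees the subdivision vertices are distinct. So I do not anticipate a serious obstacle.
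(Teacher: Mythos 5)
Your proof is correct, and while (i) and (ii) follow the paper's own double counts (in (ii) you bound the per-vertex contribution by $ab\le k^2/4$ instead of the paper's cruder $\binom{k}{2}$, which only strengthens the estimate), your part (iii) takes a genuinely different route. The paper argues extremally: it takes a tree $T$ with $\dg_T(x)\le 2$ on $X$ of maximum order, and if $T$ is not spanning it combines (i), (ii) and the bound $|\overline A|\le \mu\sum_{\ell\ge c}\binom{|\overline B|}{\ell}$ to reach a contradiction -- this last count of $X$-vertices squeezed into a small $\overline B$ is exactly where the hypothesis $|Y|\ge \mu(k-1)(2^{k-c-1}-2^c+1)$ enters. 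You instead build the tree directly: subdivide the star centred at $y_0$ by distinct common neighbours $x_z\in N_G(y_0)\cap N_G(z)$, supplied by Hall's theorem (the verification $|N_G(y_0)\cap N_G(Z)|\ge \tfrac{k}{k-1}|Z|$ is the same localized count as in (i) and is sound, since each $x$ in that set already spends one edge on $y_0\notin Z$), and then hang the remaining $X$-vertices as leaves. This yields the stronger conclusion that the nice spanning tree exists with no lower bound on $|Y|$ at all (for $k\le 2c+1$ the paper's bound is vacuous anyway, so the gain is in the regime $k>2c$), whereas the paper's maximal-tree argument is what forces the $\mu$- and $2^{k-c-1}$-type terms into the admissibility function $g(c,k,\mu)$; your construction would let that particular contribution be dropped, though other parts of the proof of Theorem \ref{mainlong} impose their own lower bounds on $n$, so the theorem's hypotheses would not disappear entirely.
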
 
\begin{proof} To prove (i), we count  in two  ways $\eps_G(N_G(y), Y\backslash\{y\})$ for $y\in Y$. Each $x\in N_G(y)$ is adjacent to at most $k-1$ vertices in $Y\backslash\{y\}$, and each $z\in Y\backslash \{y\}$ is adjacent to at least $k$ vertices in $N_G(y)$. Thus, 
$$
k(|Y|-1)\leq \eps_G(N_G(y), Y\backslash\{y\})\leq  (k-1)|N_G(y)|.
$$

To prove (ii), let $B\subseteq Y$, and let
	$$
	S=\{(x,\{y,z\})\ |\ y\in B, z\in \overline B, x\in N_G(y)\cap N_G(z)\}.
	$$
	On the one hand, for fixed $y\in B, z\in \overline B$, there are at least $k$ choices for $x\in N_G(y)\cap N_G(z)$, and so $|S|\geq k|B| |\overline B|$.  On the other hand,  each $x\in N_G(B)\cap N_G(\overline B)$ has at most $k$ neighbors, and so there are at most $\binom{k}{2}$ choices for the set $\{y,z\}$ with $y\in B, z\in \overline B$, and consequently, $|S| \leq \binom{k}{2} |N_G(B)\cap N_G(\overline B)|$. Thus, 
	$$
	k|B| |\overline B| \leq \binom{k}{2} |N_G(B)\cap N_G(\overline B)|.
	$$

To prove (iii), let $\mathscr T=\{T\subseteq G\ |\ T \text{ is a tree, } \dg_T(x)\leq 2\  \forall x\in V(T)\cap X\}$. Since every edge of $G$ induces a tree that belongs to $\mathscr T$, we have  $\mathscr T\neq \varnothing$, so we may choose a tree  $T\in \mathscr T$ whose order is maximum. Let 
	\begin{align*} 
		&&
		A&=V(T)\cap X,
		&&
		B=V(T)\cap Y,\\
		&&
		A_1&=\{x\in A\ |\   \dg_T(x)=1\},
		&&
		A_2=A\backslash A_1.
		&&
	\end{align*}
	It is clear that $A\neq \varnothing, B\neq \varnothing$. In fact, since each vertex $x\in X$ and two of its neighbors induce a tree that belongs to $\mathscr T$, $|B|\geq 2$.   If $T$ is spanning, then there is nothing to prove. So we assume that  $T$ is not spanning, or equivalently, $\overline A\cup \overline B\neq \varnothing$. We claim that
	\begin{enumerate} [(a)]
		\item $|A_2|=|B|-1$;
		\item $\eps_G(A_1, \overline B)=\eps_G(\overline A, B)=0$;
		\item $\overline B\neq \varnothing$;
		\item $\eps_G(A_2, \overline B)\neq 0$;
		\item $|\overline A|\geq 1+ \dfrac{  |Y|-1}{k-1}$;
		\item $2|B| |\overline B|\leq (k-1) (|B|-1)$.
	\end{enumerate}
	Since $T$ is a tree, we have $$|A_1|+|A_2|+|B|=|V(T)|=|E(T)|+1=|A_1|+2|A_2|+1,$$ which proves (a). The maximality of $T$ implies (b).  	To prove (c), suppose on the contrary that $\overline B= \varnothing$. Then  $\overline A\neq \varnothing$. This together with $\eps_G(\overline A, B)=0$ imply that all vertices in $\overline A$ are isolated,
	which contradicts our hypothesis that $\dg_G(x)\geq 2$ for  $x\in \overline A$.  
		Combining our hypothesis and (b) implies that 
	$$\varnothing \neq N_G(B)\cap N_G(\overline B)\subseteq A_2,$$
	which proves (d). By (a)--(c) and Lemma~\ref{LEM:Xdegree}(i), we have the following which proves (e).
	\begin{align*}
		|\overline A|+|Y|-2\geq |\overline A|+|B|-1&=|\overline A|+|A_2|  \geq  |N_G(\overline B)|\geq  \frac{k(|Y|-1)}{k-1}.
	\end{align*} 
Applying Lemma~\ref{LEM:Xdegree}(ii), we have the following which proves (f).
	\begin{align*} 
	    	2|B| |\overline B| \leq (k-1) |N_G(B)\cap N_G(\overline B)| \leq (k-1)|A_2|=(k-1) (|B|-1).
	\end{align*}
	We obtain a contradiction to (f) if $|\overline B|  \geq  \frac{k-1}{2}$. Thus we assume $|\overline B|  < \frac{k-1}{2}$ and so $k> 2c+1$ by $|\overline B| \ge c$. 
 Since $|Y| \ge \mu(k-1)(2^{k-c-1}-2^c+1)$,  then by (e)  we have $|\overline A| \ge \frac{|Y|-1}{k-1}+1 \ge \mu(2^{k-c-1}-2^c+1)$.
Now  we have the following which is a contradiction. 
\begin{align*} 
    |\overline A|&\leq \mu \sum\nolimits_{\ell=c}^{|\overline B|}\binom{|\overline B|}{\ell}=\mu\left(\sum\nolimits_{\ell=0}^{|\overline B|}\binom{|\overline B|}{\ell}-\sum\nolimits_{\ell=0}^{c-1}\binom{|\overline B|}{\ell}\right) \nonumber\\
    &\leq  \mu\left(2^{|\overline B|}-\sum\nolimits_{\ell=0}^{c-1}\binom{c}{\ell} \right)= \mu\left(2^{|\overline B|}-\sum\nolimits_{\ell=0}^{c}\binom{c}{\ell}+1 \right) \nonumber\\
    &< \mu\left(2^{(k-1)/2}-2^{c}+1 \right). 
\end{align*}
\end{proof}
We need the following generalization of Lemma \ref{LEM:Xdegree}(i). 
\begin{theorem} \label{enumlem}
For any $n$-vertex hypergraph $\cG$, and $0\leq i\leq j\leq \rk(\cG)=:k$, we have
$$
\dfrac{\delta_i(\cG)}{\delta_j(\cG)}\geq \binom{n-i}{j-i}\Big /\dbinom{k-i}{j-i}.
$$
\end{theorem}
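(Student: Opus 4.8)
The plan is to prove the inequality by a single double-counting argument on incidences between the edges of $\cG$ and the $j$-subsets of $V(\cG)$ that they contain. Fix an arbitrary $i$-subset $I\subseteq V(\cG)$; it suffices to show
$$
\dg_\cG(I)\dbinom{k-i}{j-i}\ \ge\ \delta_j(\cG)\dbinom{n-i}{j-i},
$$
because taking the minimum of the left-hand side over all $i$-subsets $I$ yields $\delta_i(\cG)\binom{k-i}{j-i}\ge\delta_j(\cG)\binom{n-i}{j-i}$, which is the assertion (note $\binom{k-i}{j-i}\ge 1$ since $i\le j\le k$, so this rearranges to the stated ratio bound; when $\delta_j(\cG)=0$ there is nothing to prove, and when $n<j$ the right-hand side is $0$ and the inequality is trivial).

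To obtain this, I would consider the set $\mathscr{P}$ of all pairs $(e,J)$ with $e\in E(\cG)$, $J\subseteq V(\cG)$, $|J|=j$, and $I\subseteq J\subseteq e$, and count $|\mathscr{P}|$ in two ways. Grouping by the $j$-set $J$: each $j$-set $J$ with $I\subseteq J$ contributes exactly $\dg_\cG(J)\ge\delta_j(\cG)$ pairs, and there are exactly $\binom{n-i}{j-i}$ such $j$-sets, so $|\mathscr{P}|\ge\delta_j(\cG)\binom{n-i}{j-i}$. Grouping by the edge $e$: only edges with $I\subseteq e$ contribute, and such an edge contributes exactly $\binom{|e|-i}{j-i}\le\binom{k-i}{j-i}$ pairs since $|e|\le\rk(\cG)=k$ (reading $\binom{m}{r}=0$ for $m<r$, which correctly accounts for edges with $|e|<j$), so $|\mathscr{P}|\le\dg_\cG(I)\binom{k-i}{j-i}$. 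Comparing the two estimates gives the displayed inequality, and minimizing over $I$ finishes the proof.

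I do not expect a real obstacle here: the argument is routine, and the only points needing a word of care are the boundary conventions for the binomial coefficients — namely edges of size less than $j$, the degenerate case $i=j$ (where both sides of the displayed inequality count edges through $I$), and the case $\delta_j(\cG)=0$ — all of which are handled uniformly by the convention $\binom{m}{r}=0$ for $m<r$. It is also worth remarking, as a sanity check on the statement, that specializing to $(i,j)=(1,2)$ recovers Lemma~\ref{LEM:Xdegree}(i) under the hypothesis $\delta_2(\cG)\ge k$, which is the intended generalization.
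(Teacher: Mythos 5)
Your proof is correct and is essentially the same double-counting argument as in the paper: your pairs $(e,J)$ with $I\subseteq J\subseteq e$, $|J|=j$, correspond bijectively to the paper's pairs $(A,B)$ with $A=J\setminus I$ a $(j-i)$-set disjoint from $I$ and $B=e\supseteq A\cup I$, and both sides are bounded in the same way. Your extra remarks on the degenerate cases (edges of size less than $j$, $i=j$, $\delta_j(\cG)=0$) only make explicit what the paper leaves implicit.
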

\begin{proof}
Let $V$ and $E$ be the vertices and edges of $\cc G$. Let $I\subseteq V$ with $|I|=i$, and let $N_\cc G(I)$ denote the number of edges in $\cc G$ which contain $I$. We count in two  ways the size of the following set.
$$
S=\{(A,B)\ | \ A\subseteq V, |A|=j-i, A\cap I=\varnothing, B\in E, A\cup I\subseteq B  \}.
$$
On the one hand, there are $\binom{n-i}{j-i}$ choices for $A\subseteq V$ with $|A|=j-i, A\cap I=\varnothing$, and for each such $A$, there are at least $\delta_j(\cc G)$ choices for $B\in E$ with $A\cup I\subseteq B$. On  the other hand, there are $|N_\cc G(I)|$ choices for $B\in E$ with $I\subseteq B$, and for each such $B$, there are at most $\binom{k-i}{j-i}$ choices for $A\subseteq V$ with $|A|=j-i, A\cap I=\varnothing$,  $A\cup I\subseteq B$. Thus,
$$
\delta_j(\cc G)\binom{n-i}{j-i}\leq |S|\leq |N_\cc G(I)| \binom{k-i}{j-i}.
$$
To complete the proof, it suffices to take the minimum of the above over all $I$ with $|I|=i$. 
\end{proof}

Finally, we need the following two lemmas.
\begin{lemma} \label{ineqalg1}
   We have the following.
\begin{align*}
    &\max\left \{\binom{k}{2}+1, \frac{4c^2+k^2-3k+2}{4c-2k+2}\right\}\\
    &\quad\quad\quad=
    \begin{cases}
        1+\dbinom{k}{2}  &\mbox {if } 4\leq c\leq k\leq 2c-2,\\
        \dfrac{4c^2+k^2-3k+2}{4c-2k+2}  &\mbox {if } c\geq 3,2c-1\leq k\leq 2c.
    \end{cases}
\end{align*}
\end{lemma}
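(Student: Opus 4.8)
The plan is to clear denominators and reduce the identity to a single cubic inequality in $k$. Write $P:=\binom{k}{2}+1=\tfrac12(k^2-k+2)$ and $Q:=\dfrac{4c^2+k^2-3k+2}{4c-2k+2}$. The first and crucial observation is that in both ranges one has $k\le 2c$, so the denominator $4c-2k+2=2(2c-k+1)\ge 2$ is positive; hence $P\ge Q$ if and only if $P(4c-2k+2)\ge 4c^2+k^2-3k+2$, and the sign of $P-Q$ equals the sign of $P(4c-2k+2)-(4c^2+k^2-3k+2)$. A direct expansion yields the key identity
\begin{equation*}
\Big(\binom{k}{2}+1\Big)(4c-2k+2)-\big(4c^2+k^2-3k+2\big)=k(k-1)(2c-k)-4c(c-1)=:\Delta(c,k).
\end{equation*}
So the lemma is equivalent to the two claims: $\Delta(c,k)\ge 0$ whenever $4\le c\le k\le 2c-2$, and $\Delta(c,k)\le 0$ whenever $c\ge 3$ and $2c-1\le k\le 2c$.

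The second claim is immediate, since there $k\in\{2c-1,2c\}$: for $k=2c$ one has $k(k-1)(2c-k)=0<4c(c-1)$, and for $k=2c-1$ one has $k(k-1)(2c-k)=(2c-1)(2c-2)=2(c-1)(2c-1)<4c(c-1)$, so $\Delta(c,k)<0$ in both cases. For the first claim I would fix $c\ge 4$ and regard $g(k):=k(k-1)(2c-k)$ as a real function on $[c,2c-2]$, aiming to show $\min_{[c,2c-2]}g\ge 4c(c-1)$. Since $g'(k)=-3k^2+(4c+2)k-2c$ is a downward parabola with $g'(c)=c^2>0$ and $g'(2c-2)=-4c^2+18c-16<0$ for every $c\ge 4$, the derivative changes sign exactly once on $[c,2c-2]$; hence $g$ first increases and then decreases there, so its minimum is attained at an endpoint. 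Now $g(c)=c^2(c-1)\ge 4c(c-1)$ because $c\ge 4$, and $g(2c-2)=2(2c-2)(2c-3)=4(c-1)(2c-3)\ge 4c(c-1)$ because $c\ge 3$; thus $\Delta(c,k)\ge 0$ throughout the interval, which proves the first claim and hence the lemma.

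The only steps that need care are the algebra producing the identity for $\Delta(c,k)$ and the verification that $g$ is unimodal on $[c,2c-2]$ (equivalently, that the larger root of $g'$ lies strictly inside this interval); both are routine, so I do not anticipate a genuine obstacle — the lemma is essentially a packaging of the arithmetic behind the definition of $g(c,k,\mu)$.
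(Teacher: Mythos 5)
Your proof is correct. The identity $\big(\binom{k}{2}+1\big)(4c-2k+2)-(4c^2+k^2-3k+2)=k(k-1)(2c-k)-4c(c-1)$ checks out, the denominator $2(2c-k+1)$ is positive throughout both ranges, your two explicit evaluations at $k=2c-1,2c$ settle the second case, and the endpoint analysis of $g(k)=k(k-1)(2c-k)$ on $[c,2c-2]$ (the sign change of $g'$, together with $g(c)=c^2(c-1)\ge 4c(c-1)$ for $c\ge 4$ and $g(2c-2)=4(c-1)(2c-3)\ge 4c(c-1)$ for $c\ge 3$) settles the first. Your route differs from the paper's in the algebraic packaging: the paper multiplies the same difference by $4$ and writes it as $(k^4-6k^3+9k^2-4k+4)-(k^2-k-4c+2)^2$, then bounds the square by substituting the extreme admissible value of $c$ in each range, using the sign condition $k^2-k-4c+2\ge 0$, which requires $k\ge 5$ and hence forces a separate numerical check of the case $c=k=4$; you instead fix $c$, factor the difference as a cubic in $k$ minus a constant, and argue by unimodality that the minimum over $k\in[c,2c-2]$ is attained at an endpoint. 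Your factored identity treats $c=k=4$ uniformly (there $\Delta=0$, consistent with both sides equalling $7$) and avoids the quartic expansion, at the modest cost of a short calculus argument; the paper's version is a pure substitute-and-expand computation with one special case. Either way the lemma follows.
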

\begin{proof}
If $k\geq 5$, we have $k^2 - k -4 c + 2\geq k^2 - k -4 k + 2=k^2-5k+2\geq 0$, and so
\begin{align*}
  & 4 (4c-2k+2)\left ( \binom{k}{2}+1  \right) - 4(4c^2+k^2-3k+2)\\
&=(k^4 - 6 k^3 + 9 k^2 - 4 k + 4) -  ( k^2 - k -4 c + 2)^2\\
&\begin{cases}
    \geq (k^4 - 6 k^3 + 9 k^2 - 4 k + 4) -( k^2 - k -2k-4 + 2)^2=4k(k-4)\geq 0  &\mbox {if } k\leq 2c-2,\\
    \leq (k^4 - 6 k^3 + 9 k^2 - 4 k + 4) -( k^2 - k -2k-2 + 2)^2=4(1-k)\leq 0  &\mbox {if } k\geq 2c-1.\\
\end{cases}
\end{align*}
If $c=k=4$, then $\binom{k}{2}+1=\frac{4c^2+k^2-3k+2}{4c-2k+2}=7$. 
\end{proof} 
\begin{lemma} \label{ineqalg2}
If $\mu\geq 1,c\geq 3$ and $k\geq 2c+1$, then 
    $$2\mu(k-1)(2^{k-c-1}-2^c+1)+\dbinom{k}{2}+1\geq \frac{4 c^2 +5 k^2- 8 c k   - 3 k + 2}{2k-4 c  + 2}.$$ 
\end{lemma}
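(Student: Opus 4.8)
The plan is to first dispose of the parameter $\mu$ and then compare the two sides after a mild algebraic rearrangement. Since $k\ge 2c+1$ forces $k-c-1\ge c$, we have $2^{k-c-1}\ge 2^{c}$, hence $2^{k-c-1}-2^{c}+1\ge 1$; as $k-1\ge 2c>0$, the summand $2\mu(k-1)(2^{k-c-1}-2^{c}+1)$ is nondecreasing in $\mu$ and is already at least $2(k-1)$ when $\mu=1$. So it suffices to prove
\[
2(k-1)+\binom{k}{2}+1\ \ge\ \frac{4c^{2}+5k^{2}-8ck-3k+2}{2k-4c+2},
\]
and the left-hand side equals $\tfrac14\bigl(2k^{2}+6k-4\bigr)$.

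For the right-hand side I would use the identities
\[
4c^{2}+5k^{2}-8ck-3k+2=4(k-c)^{2}+(k-1)(k-2),\qquad 2k-4c+2=2(k-2c+1),
\]
and substitute $j:=k-2c+1$, so that $2(k-c)=k+j-1$ and, because $k\ge 2c+1$ and $c\ge 3$, we have $2\le j\le k-5$ (in particular $k\ge 7$). Expanding $(k+j-1)^{2}$ gives
\[
\frac{4(k-c)^{2}+(k-1)(k-2)}{2(k-2c+1)}=\frac{(k+j-1)^{2}+(k-1)(k-2)}{2j}=\frac{(k-1)(2k-3)}{2j}+(k-1)+\frac{j}{2}.
\]
Bounding the first term using $j\ge 2$ and the last term using $j\le k-5$ yields the clean upper bound $\tfrac14\bigl(2k^{2}+k-11\bigr)$ for the right-hand side.

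Combining the two estimates, the difference of the two sides is at least $\tfrac14\bigl((2k^{2}+6k-4)-(2k^{2}+k-11)\bigr)=\tfrac14(5k+7)>0$ because $k\ge 7$, which proves the lemma. I do not expect any genuine difficulty here: the exponential term plays essentially no role once one observes it is at least $1$, and the resulting polynomial inequality is comfortably slack; the only points requiring attention are keeping the directions of the estimates consistent through the substitution $j=k-2c+1$ and checking the elementary identity for the numerator.
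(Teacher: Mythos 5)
Your proof is correct. The opening reduction is exactly the paper's: use $\mu\ge 1$ and $2^{k-c-1}-2^{c}+1\ge 1$ (valid since $k-c-1\ge c$) to replace the exponential term by $2(k-1)$, so that everything reduces to $2(k-1)+\binom{k}{2}+1\ge \frac{4c^{2}+5k^{2}-8ck-3k+2}{2k-4c+2}$. Where you diverge is in the finishing algebra: the paper clears the (positive) denominator, writes the resulting difference as $\frac14\bigl[(k^{4}+2k^{3}-7k^{2}+20k-12)-(k^{2}-k+4c-2)^{2}\bigr]$, and uses $4c\le 2k-2$ to bound the square, getting $28(k-1)\ge 0$; you instead substitute $j=k-2c+1$, use the identity $4c^{2}+5k^{2}-8ck-3k+2=4(k-c)^{2}+(k-1)(k-2)$ to split the right-hand side as $\frac{(k-1)(2k-3)}{2j}+(k-1)+\frac{j}{2}$, and bound the two $j$-dependent terms via $2\le j\le k-5$. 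I checked your identity, the range of $j$, the monotonicity directions (the first term decreasing in $j$, the last increasing), and the final comparison $\frac14(2k^{2}+6k-4)-\frac14(2k^{2}+k-11)=\frac14(5k+7)>0$; all are in order. Your route is marginally more transparent about where the slack comes from (it isolates the only terms involving $c$ through $j$), while the paper's is a one-line expansion-and-square-completion; both are elementary and equally rigorous.
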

\begin{proof}
Since $\mu\geq 1$ and $k\geq 2c+1$, we have $2\mu(k-1)(2^{k-c-1}-2^c+1)+\binom{k}{2}+1\geq 2(k-1)+\binom{k}{2}+1$. 
    Since $k\geq 7$, we have $k^2 - k +4 c - 2\geq k^2 - k +12 - 2=k^2 - k +10\geq 0$, and so
\begin{align*}
  & 4 (2k-4 c  + 2)\left ( 2(k-1)+\binom{k}{2}+1  \right) - 4(4 c^2 +5 k^2- 8 c k   - 3 k + 2)\\
&= (k^4 + 2 k^3 - 7 k^2 + 20 k - 12) -  (k^2 - k +4 c - 2)^2\\
&    \geq (k^4 + 2 k^3 - 7 k^2 + 20 k - 12) -( k^2 - k +2k-2 - 2)^2\\
&=28(k-1)\geq 0.
\end{align*}
\end{proof}

\section{Quasi-Eulerian Hypergraphs} \label{quasisec}
In this section, we prove Theorem \ref{quasimainthm}. Let  $\cc G$ be a $\left(1+\left\lceil k/c \right\rceil\right)$-flag-connected hypergraph where $c:=\crk(\cG)\geq 3$ and  $k:=\rk(\cG)$. We  show that $\cc G$ is quasi-Eulerian.  Let $G[X,Y]=\mathscr I(\cG)$ with $X=E, Y=V$. 
It is clear that $3\leq c\leq \dg_G(x)\leq k$ for  $x\in X$, and that $G$ is $(1+\lceil k/c \rceil)$-edge-connected. 
It suffices to show that $G$ has an even $(X,2)$-regular subgraph. Suppose on the contrary that $G$ does not have an even $(X,2)$-regular subgraph.   
By Theorem \ref{parity-factor}, $G$ has a barrier. Let $(S,T)$ be a minimum barrier of $G$ (so $S,T\subseteq X$ and $S\cap T=\varnothing$) and let $O_1,\dots,O_{q}$ be the $T$-odd components of $G- (S\cup T)$ where $q:=q(S,T)$. 

We define the auxiliary bipartite graph $H[S\cup T, \{o_1,\dots,o_q\}]$ 
such that 
\begin{align*}
\eps_{H}(x,o_i)= \eps_G(x,O_i), \mbox{ for }x\in S\cup T, i\in [q].
\end{align*}
In what follows, we shall define $f_0,f_1,f_2:V(H)\rightarrow \mathbb{R}$  such that $$\sum\nolimits_{x\in V(H)}f_0(x)=\sum\nolimits_{x\in V(H)}f_1(x)=\sum\nolimits_{x\in V(H)}f_2(x).$$ 
We let 
\begin{align*}
f_{0}(x)= \begin{cases}
c & \text{if}\,\, x\in S, \\
0 & \text{if}\,\, x\in T, \\
\dfrac{c}{2}\big(\eps_H(T,o_i)-1\big) & \text{if}\,\, x=o_i, i\in [q],
\end{cases}
\end{align*}
and
\begin{align*}
f_{1}(x)= \begin{cases}
0 & \text{if}\,\, x\in S\cup T, \\
f_0(x)+ \sum\nolimits_{u\in S} \dfrac{\eps_H(u,x)}{\dg_H(u)}f_0(u)         & \text{if}\,\, x=o_i, i\in [q].
\end{cases}
\end{align*}
Intuitively speaking, we transfer the value of $f_0(x)$ for $x\in S$ to $o_i$s so that 
 every vertex $x\in S$ gives $\frac{\eps_H(x,o_i)}{\dg_H(x)}f_0(x)$ to each $o_i$ for $i\in [q]$. Since $\dg_H(x)=\sum\nolimits_{i=1}^q \eps_H(x,o_i)$, we have $f_{1}(x)=0$ for $x\in S$.  Observe that
$$f_1(o_i)\geq f_0(o_i)=\dfrac{c}{2}\big(\eps_H(T,o_i)-1\big) \mbox { for } i\in [q].$$

Let $I=\{i\in [q]\ |\  \eps_H(T,o_i)=1\}, J=[q]\backslash I$. Since $G$ is $1+\left \lceil k/c \right\rceil$-edge-connected, and for $i\in I$, $\eps_G(T,O_i)=1$, we have 
$$\sum\nolimits_{x\in S} \eps_H(x,o_i)=\eps_G(S,O_i)\geq  \left \lceil \frac{k}{c} \right\rceil \mbox { for } i\in I.$$
Therefore,  for $i\in I$
\begin{align*}
f_1(o_i)&= \sum\nolimits_{x\in S} \dfrac{c\eps_H(x,o_i)}{\dg_H(x)}\geq \frac{c}{k} \sum\nolimits_{x\in S} \eps_H(x,o_i)
\geq  \frac{c}{k}   \left\lceil \frac{k}{c}\right \rceil\geq 1,
\end{align*}
where the first inequality is obtained by the fact that $\dg_H(x) \leq \dg_G(x)\leq k$ for  $x\in S$.

In order to define $f_2$, we transfer the value of $f_1(o_i)s$ to $T$ so that  
every vertex $o_i$ for $i\in [q]$ gives $\frac{\eps_H(x,o_i)}{\eps_H(T,o_i)}f_1(o_i)$ to each $x\in T$. More precisely,
\begin{align*}
f_{2}(x):= \begin{cases}
0 & \text{if}\,\, x\in S\cup \{o_1,\dots,o_q\}, \\
\sum\nolimits_{i\in [q]} \dfrac{\eps_H(x,o_i)}{\eps_H(T,o_i)}f_1(o_i)         & \text{if}\,\,  x\in T.
\end{cases}
\end{align*}

Fix $x\in T$, and let $I_x=\{i\in I\ |\  \eps_H(x,o_i)=1\}, J_x=\{i\in J\ |\ \eps_H(x,o_i)=1\}$. Observe that $\dg_G(x)=\dg_H(x)=|I_x|+|J_x|$, and that for $i\in J$, $\eps_H(T,o_i)\geq 3$ and so 
$\frac{\eps_H(T,o_i)-1}{\eps_H(T,o_i)}\geq 2/3$. 
We have 
\begin{align*}
f_2(x)&= \sum\nolimits_{i\in [q]} \frac{\eps_H(x,o_i)}{\eps_H(T,o_i)}f_1(o_i)\\
&=\sum\limits_{i\in I_x} f_1(o_i)+\sum\nolimits_{i\in J_x} \frac{f_1(o_i)}{\eps_H(T,o_i)}\\
&\geq  |I_x|  + \frac{c}{2}\sum\nolimits_{i\in J_x}\frac{\eps_H(T,o_i)-1}{\eps_H(T,o_i)} \\
&\geq  |I_x|  + \frac{c}{2}\big(\dg_G(x)-|I_x|\big) \frac{2}{3}\\
&\geq  \dg_G(x)\geq c.
\end{align*}
Therefore,
\begin{align*}
c|T|&\leq  \sum\nolimits_{x\in T}f_2(x)= \sum\nolimits_{x\in S}f_0(x)+\sum\nolimits_{i\in [q]} f_0(o_i)\\
&= c|S|+ \frac{c}{2}\sum\nolimits_{i\in [q]}\big(\eps_H(T,o_i)-1\big).
\end{align*}
Thus,  
$$|T|-|S|\leq \frac{1}{2} \sum\nolimits_{i\in [q]}\big(\eps(T,O_i)-1\big),$$
contradicting  Lemma~\ref{minimum-barrier}(iv). Therefore, $G$ has an even $(X,2)$-regular subgraph, and consequently, $\cG$ is quasi-Eulerian.
\qed

\section{Spanning Euler Tours} \label{spaneulersecr2}
In this section, we prove Theorem \ref{mainlong}. In subsections \ref{consspaneu}--\ref{dischspaneu}, and \ref{spaneufinal}, we settle the cases  $r=2$, and $r>2$, respectively.
\subsection{Construction of  Spanning Euler Tours} \label{consspaneu}
Let $\cc G$ be an $(n,c,k,\mu)$-admissible hypergraph with $\delta_2(\cc G)\geq k$. We  prove that $\cc G$ has a spanning Euler tour.  
Let $G[X,Y]=\mathscr I(\cG)$ with $X=E, Y=V$. By Lemma \ref{LEM:Xdegree}(iii), 
$G$ has a spanning tree $F$ with $\dg_F(x)\leq 2$ for all $x\in X$. Let $A=\{x\in X\ |\ \dg_F(x)= 2\}$. We have
$$|A|+|\overline A|+|Y|=|V(T)|=|E(F)|+1=2|A|+|\overline A|+1,$$
and so $|A|=|Y|-1$. Since vertices of $\overline A$ are leaves of $F$, they are not cut vertices of $F$. Therefore, the graph $F^*:=F-\overline A$  is also a tree with $Y\subseteq V(F^*)$. Let $O\subseteq Y$ be the set of vertices of odd degree in $F^*$. Observe that $|O|$ is even and $O\neq \varnothing$. 
Let us assume that $O=\{y_1,z_1,\dots, y_t, z_t\}$ where $|O|=2t$, and define the new set of vertices $W=\{w_1,\dots, w_t\}$ and the new set of edges $R=\{w_1y_1, w_1z_1,\dots, w_ty_t, w_tz_t\}$. Let $G^*[X^*,Y]=G-A+ R$ where  $X^*:=(X\setminus A) \cup W$. Intuitively speaking, $G^*$ is obtained from $G$ by removing $A$ (together with the edges incident with it), and adding $W$ to $X$ together with the edges in $R$. Observe that $2\leq  \dg_{G^*}(x)\leq k$ for  $x\in X^*$. 

We shall show that $G^*$ has an even $(X^*,2)$-regular subgraph $Q$, 
which will in turn imply that $G$ has a connected spanning $(X,2)$-regular subgraph and so $\cG$  has a  spanning Euler tour. The reasoning is as follows.  
Since $\dg_{G^*} (w)=2$ for each $w\in W$,  we have $\dg_{Q}(w)=2$. Let $Q^*=Q-W$. We have $\dg_{Q^*}(x)=2$ for $x\in \overline A$, $\dg_{Q^*}(y)$ is odd for $y\in O$, and $\dg_{Q^*}(y)$ is even for $y\in \overline{O}$. Recall that $\dg_{F^*}(x)=2$ for $x\in A$, $\dg_{F^*}(y)$ is odd for $y\in O$, $\dg_{F^*}(y)$ is even for $y\in \overline{O}$, and $F^*$ is connected.  Therefore, $F^*\cup Q^*$ corresponds to  a  spanning Euler tour in $\cG$.

\subsection{Properties of $G^*$}
Suppose on the contrary that $G^*$ does not have an even $(X^*,2)$-regular subgraph. By Theorem \ref{parity-factor}, $G^*$ has a barrier. Let $(S,T)$ be a minimum barrier of $G^*$  (so $S,T\subseteq X^*$ and $S\cap T=\varnothing$), and let $O_1,\dots,O_{q}$ be the $T$-odd components of $G^*\backslash (S\cup T)$. Let $u\in T$. Since $c\geq 2$ and all vertices in $W$ have at least two neighbors, we have $\dg_{G*}(u)\geq 2$. 
By Lemma~\ref{minimum-barrier},  there is no edge between $u$ and the $T$-even components of $G^*-(S\cup T)$, and there is at most one edge between $u$ and each $T$-odd component of $G^*-(S\cup T)$. Thus, $q \ge 2$. For $i\in [q]$, let
\begin{align*} 
		&&
		I_i=V(O_i)\cap X,  
		&&
		J_i=V(O_i)\cap Y.
		&&
\end{align*}

For $i\in [q]$, if $|J_i|<c$, then $I_i=\varnothing$. Moreover, if $k> 2c$, then for $i\in [q]$ with  $c\leq |J_i|< k-c$  we have the following.
\begin{align} \label{powersetineq}
    |I_i|&\leq \mu \sum\nolimits_{\ell=c}^{|J_i|}\binom{|J_i|}{\ell}=\mu\left(\sum\nolimits_{\ell=0}^{|J_i|}\binom{|J_i|}{\ell}-\sum\nolimits_{\ell=0}^{c-1}\binom{|J_i|}{\ell}\right) \nonumber\\
    &\leq  \mu\left(2^{|J_i|}-\sum\nolimits_{\ell=0}^{c-1}\binom{c}{\ell} \right)= \mu\left(2^{|J_i|}-\sum\nolimits_{\ell=0}^{c}\binom{c}{\ell}+1 \right) \nonumber\\
    &\leq \mu\left(2^{k-c-1}-2^{c}+1 \right). 
\end{align}

We claim that 
\begin{align} \label{X-connectivityeq}
    \eps_{G^*}\big((S\cup T)\setminus W, O_i\big )\ge z \quad \forall i\in [q].
\end{align} 
where $z:=3$ if  $c=3$ and $k\in \{3,4\}$, and $z:†=k/2$ otherwise.  To prove this claim, let us fix $i\in [q]$.

Since for $1\leq a\leq n/2$, $\min\big\{x(n-x)\ |\ a\leq x\leq  n-a\big\}=a(n-a)$, 
by Lemma \ref{LEM:Xdegree}(ii) and the construction of ${G^*}$, we have the following. 
\begin{align*}
    \eps_{G^*}\big((S\cup T)\setminus W, O_i\big)   &\geq |N_{G}(J_i)\cap N_{G}(\overline {J_i})| -|A|\geq \frac{2|J_i||\overline {J_i}|}{k-1}-|A|\\
    &\geq 1-n+
    \begin{cases}
       \dfrac{2c(n-c)}{k-1} & \mbox { if }k\leq 2c\mbox{ and }  c\leq |J_i|\leq n-c\\ 
        \dfrac{2(k-c)(n-k+c)}{k-1} & \mbox { if } k>2c\mbox{ and }  k-c\leq |J_i|\leq n-k+c
    \end{cases}\\
    &=:\alpha(n,c,k).
\end{align*}   
To show that \eqref{X-connectivityeq} holds under the above cases, observe that $\alpha(n,3,3)=2n-8\geq 3$ for $n\geq 7$, and  $\alpha(n,3,4)=n-5\geq 3$ for $n\geq 10$. For $k\leq 2c$, since $\alpha(n,c,k)\geq k/2$ is equivalent to $n\geq (4c^2+k^2-3k+2)/(4c-2k+2)$, using Lemma \ref{ineqalg1} and  the fact that $\cc G$ is $(n,c,k,\mu)$-admissible, we have $\alpha(n,c,k)\geq k/2$. For $k> 2c$, since $\alpha(n,c,k)\geq k/2$ is equivalent to $n\geq (4 c^2 - 8 c k + 5 k^2 - 3 k + 2)/(2k-4 c  + 2)$, using Lemma \ref{ineqalg2} and  $(n,c,k,\mu)$-admissibility of $\cc G$, we have $\alpha(n,c,k)\geq k/2$.

Now, using Lemma~\ref{LEM:Xdegree}(i) and \eqref{powersetineq}, we have the following.
\begin{align*}
    \eps_{G^*}\big((S\cup T)\setminus W, O_i\big)   &\geq |N_{G^*}(J_i)\backslash (W\cup I_i)|\geq |N_{G}(J_i)|-| I_i| - |A|\\\nonumber
&\geq \frac{k(n-1)}{k-1}-(n-1)-| I_i|   =\frac{n-1}{k-1}-| I_i|\\
&\geq \frac{n-1}{k-1}-\begin{cases} 
        0 & \mbox { if }  |J_i|<c\\ 
        \mu\left(2^{k-c-1}-2^{c}+1 \right)  & \mbox { if }  k>2c\mbox{ and }  c\leq |J_i|< k-c
    \end{cases}\\    
    &=:\beta(n,c,k,\mu).  
\end{align*}  
To show that \eqref{X-connectivityeq} holds under the above cases, observe that $\beta(n,3,3,\mu)=(n-1)/2\geq 3$ for $n\geq 7$, and  $\beta(n,3,4,\mu)=(n-1)/3\geq 3$ for $n\geq 10$. For $k\leq 2c$, since $\beta(n,c,k,\mu)\geq k/2$ is equivalent to $n\geq \binom{k}{2}+1$, using $(n,c,k,\mu)$-admissibility of $\cc G$ and Lemma \ref{ineqalg1}, we have $\beta(n,c,k,\mu)\geq k/2$. For $k> 2c$, since $\beta(n,c,k,\mu)\geq k/2$ is equivalent to $n\geq \mu(k-1)(2^{k-c-1}-2^c+1)+\binom{k}{2}+1$, the $(n,c,k,\mu)$-admissibility of $\cc G$ implies that $\beta(n,c,k,\mu)\geq k/2$. 

To settle the remaining cases, observe that if $|J_i|>n-c$, then $|\overline{J_i}|<c$, and if $k>2c$  and $n-k+c<|J_i|\leq n-c$, then  $c\leq |\overline{J_i}|< k-c$.
Again, by applying Lemma~\ref{LEM:Xdegree}(i), we have that each vertex of $J_\ell$ for $\ell\in [q]\backslash\{i\}$ has at least $(n-1)/(k-1)$ neighbors in $((S\cup T)\backslash W) \cup I_\ell$, hence
\begin{align*}
     |(S\cup T)\backslash W|&\geq \frac{n-1}{k-1}- |I_\ell|\\ 
    &\geq
    \dfrac{n-1}{k-1}-
    \begin{cases} 
        0 & \mbox { if }  |J_i|>n-c,\\ 
         \mu\left(2^{k-c-1}-2^{c}+1 \right)  & \mbox { if }  k>2c\mbox{ and }  n-k+c<|J_i|\leq n-c.
    \end{cases}
\end{align*}
Let $u\in (S\cup T)\backslash W$. Recall that  $u$ corresponds to an edge in $\cc G$, and so it has at least $c$ neighbors in $Y$. If $|J_i|> n-c$, then $u$ must have at least one neighbor in $J_i$, hence 
$$\eps_{G^*}\big((S\cup T)\setminus W, O_i\big) \geq  |(S\cup T)\backslash W|  \geq  \dfrac{n-1}{k-1}\geq  z.$$ 
If $k>2c$ and  $n-k+c<|J_i|\leq n-c$, then  for at most $\mu\left(2^{k-c-1}-2^{c}+1 \right)$ vertices $u\in (S\cup T)\backslash W$, $N_{G^*}(u)\subseteq \overline {J_i}$, and so 
$$\eps_{G^*}\big((S\cup T)\setminus W, O_i\big) \geq  |(S\cup T)\backslash W| -\mu\left(2^{k-c-1}-2^{c}+1 \right) \geq \dfrac{n-1}{k-1}- 2\mu\left(2^{k-c-1}-2^{c}+1 \right)\geq z.$$

\subsection{Discharging}  \label{dischspaneu}
Let us define the auxiliary bipartite graph $H[S\cup T, \{o_1,\dots,o_q\}]$ 
such that 
\begin{align*}
	\eps_{H}(x,o_i)= \eps_{G^*}(x,O_i), \mbox{ for }x\in S\cup T, i\in [q].
\end{align*}
Clearly, $\eps_{H}((S\cup T)\setminus W,o_i) =\eps_{G^*}((S\cup T)\setminus W,O_i)$. By Lemma \ref{minimum-barrier}  (ii), $\dg_H(x)=\dg_{G*}(x)$ for $x\in T$, but  $\dg_H(x)\leq\dg_{G*}(x)$ for $x\in S$. We shall define $f_0,f_1,f_2:V(H)\rightarrow \mathbb{R}$  such that $$\sum\nolimits_{x\in V(H)}f_0(x)=\sum\nolimits_{x\in V(H)}f_1(x)=\sum\nolimits_{x\in V(H)}f_2(x).$$ 
First, we let 
\begin{align*}
	f_{0}(x)= \begin{cases}
		k & \text{if}\,\, x\in S, \\
		0 & \text{if}\,\, x\in T, \\
		\dfrac{k}{2}\big(\eps_H(T,o_i)-1\big) & \text{if}\,\, x=o_i, i\in [q].
	\end{cases}
\end{align*}
To define $f_1$, we transfer the value of $f_0(x)$ for $x\in S$ to $o_i$s so that 
every vertex $x\in S$ gives $\frac{\eps_H(x,o_i)}{\dg_H(x)}f_0(x)$ to each $o_i$ for $i\in [q]$. Since $\dg_H(x)=\sum\nolimits_{i=1}^q \eps_H(x,o_i)$, we have $f_{1}(x)=0$ for $x\in S$. More precisely, 
\begin{align*}
	f_{1}(x):= \begin{cases}
		0 & \text{if}\,\, x\in S\cup T, \\
		f_0(x)+ \sum\nolimits_{u\in S} \dfrac{\eps_H(u,x)}{\dg_H(u)}f_0(u)  & \text{if}\,\, x=o_i, i\in [q].
	\end{cases}
\end{align*}

In order to define $f_2$, we transfer the value of $f_1(o_i)s$ to $T$ in the following manner. For each $i\in [q]$, first we transfer $\frac{k \eps_H(x,o_i)}{\dg_H(x)}$ of the value of $o_i$ to each $x\in T\setminus W$, and then we distribute the remaining value equally among the vertices in $T\cap W\cap N_H(o_i)$. Since for $x\in T\setminus W$, we have
$$\sum\nolimits_{i\in [q]} \frac{k \eps_H(x,o_i)}{\dg_H(x)}=\frac{k}{\dg_H(x)}\sum\nolimits_{i\in [q]} \eps_H(x,o_i)=k,$$
$f_2$ can be described as follows. 
\begin{align*}
	f_{2}(x)= \begin{cases}
		0 & \text{if}\,\, x\in S\cup \{o_1,\dots,o_q\}, \\
		k   & \text{if}\,\, x\in T\setminus W, \\
		\sum\nolimits_{i\in [q]} \dfrac{f_1(o_i)-\sigma_i}{\eps_H(T\cap W,o_i)}\eps_H(x,o_i)        & \text{if}\,\,  x\in T\cap W,
	\end{cases}
\end{align*}
where 
$$
\sigma_i:=\sum\nolimits_{x\in T\setminus W} \frac{k \eps_H(x,o_i)}{\dg_H(x)}  \mbox { for } i\in [q].
$$
Recall that for $x\in T\setminus W$, $\dg_H(x)=\dg_{G^*}(x)\geq c$, so we have
\begin{align} \label{sigmiineq}
    \sigma_i\leq\sum\nolimits_{x\in T\setminus W} \frac{k \eps(x,o_i)}{c} =\frac{k}{c}\eps_H(T\backslash W,o_i) \mbox { for } i\in [q].
\end{align}

We claim that in order to complete the proof, it suffices to show that
\begin{align} \label{claimf2}
	\frac{f_1(o_i)-\sigma_i}{\eps_H(T\cap W,o_i)}\geq \frac{k}{2} \mbox{ for } i\in [q]. 
\end{align}
To see this, note that if \eqref{claimf2} holds, then  for $x\in T\cap W$,
\begin{align*}
	f_2(x)\geq \sum\nolimits_{i\in [q]} \frac{k}{2}\eps_H(x,o_i)=    \frac{k}{2}\sum\nolimits_{i\in [q]} \eps_H(x,o_i)= \frac{k}{2}\dg_H(x)= k.
\end{align*}
Since $f_2(x)\geq k$ for each $x\in T$, we have 
\begin{align*}
	k|T|&\leq  \sum\nolimits_{x\in T}f_2(x)= \sum\nolimits_{x\in S}f_0(x)+\sum\nolimits_{i\in [q]} f_0(x)\\
	&= k|S|+ \frac{k}{2}\sum\nolimits_{i\in [q]}\big(\eps_H(T,o_i)-1\big).
\end{align*}
Thus,  
$$|T|-|S|\leq \frac{1}{2} \sum\nolimits_{i\in [q]}\big(\eps(T,C_i)-1\big),$$
contradicting Lemma~\ref{minimum-barrier}(iv). Therefore, $G^*$ has an even $(X^*,2)$-regular subgraph, completing the proof.

Now, let us fix $i\in [q]$, and let $s=\eps_{H}(S\setminus W, o_i), t=\eps_H(T\backslash W,o_i)$. Observe that $\eps_H(T,o_i)=t+\eps_H(T\cap W,o_i)$. Recall that $\dg_{G^*}(u)\leq k$ for $k\in S$, and  by \eqref{X-connectivityeq}, $2s\geq 6-2t$ if $c=3, k\in \{3,4\}$, and $2s\geq k-2t$ otherwise. 
We apply  \eqref{sigmiineq}  to obtain the following which proves \eqref{claimf2}.

\begin{align*}
	2f_1(o_i)-2\sigma_i-k\eps_H(T\cap W,o_i) &\geq 2\sum\nolimits_{u\in S} \frac{k \eps_H(u,o_i)}{\dg_H(u)} + 2f_0(o_i)-\frac{2kt}{c}- k\eps_H(T\cap W,o_i)\\ 
 &\geq 2\sum\nolimits_{u\in S} \frac{k \eps_H(u,o_i)}{\dg_{G^*}(u)}+ k\big(\eps_H(T,o_i)-1\big) -\frac{2kt}{c}- k\eps_H(T\cap W,o_i)\\ 
 &\geq 2\sum\nolimits_{u\in S} \eps_H(u,o_i)+kt-k-\frac{2kt}{c}\\ 
 &= 2\eps_H(S,o_i)+kt-k-\frac{2kt}{c}\\ 
 &\geq  k t-k + 2s-\frac{2kt}{c}\\
&\geq 
    \begin{cases}
        k t-k +(k-2t)-\dfrac{2kt}{4} &\mbox { if  }c\geq 4\\
         2s-k+\dfrac{kt}{3}  &\mbox { if  }c=3
    \end{cases}\\
    &\geq  
    \begin{cases}
        k t-2 t -\dfrac{kt}{2} &\mbox { if  }c\geq 4\\
         (k-2t)-k+\dfrac{kt}{3}  &\mbox { if  }c=3, k\geq 6\\ 
         (6-2t)-k+\dfrac{kt}{3}   &\mbox { if  }c=3, k\in \{3,4,5\}, t\leq 3\\
         2s   &\mbox { if  }c=3, k\in \{3,4,5\},t>3
    \end{cases}\\
&\geq
\begin{cases}
        (k-4)t/2 &\mbox { if  }c\geq 4,\\ 
        (k-6)t/3  &\mbox { if  }c=3, k\geq 6,\\ 
         (3-t)(6-k)/3   &\mbox { if  }c=3, k\in \{3,4,5\}, t\leq 3,\\
         2s   &\mbox { if  }c=3, k\in \{3,4,5\},t>3.
    \end{cases}
\end{align*}
\qed 

\subsection{Proof of Theorem \ref{mainlong}} \label{spaneufinal}
Let $\cc G$ be an $(n,c,k,\mu)$-admissible hypergraph, and let $2< r\leq k$. 
It suffices to show that 
\begin{align} \label{mainnbounddelt}
\delta_r(\cc G)> (k-1)\dbinom{k-2}{r-2}\Big/\dbinom{n-2}{r-2},
\end{align}
for by Theorem \ref{enumlem}, 
$$
\delta_2(\cc G)\geq \delta_r(\cc G)\dbinom{n-2}{r-2}\Big/\dbinom{k-2}{r-2}>k-1,
$$
and so by the results of subsections \ref{consspaneu}--\ref{dischspaneu}, $\cc G$ has a spanning Euler tour. To prove \eqref{mainnbounddelt}, first recall that $\cc G$ is an $(n,c,k,\mu)$-admissible hypergraph, and so if $4\leq r\leq k$, then $n\geq \binom{k}{2}+1$. 
Using Bernoulli's inequality ($\forall p\geq 1, \forall x\geq -1: (1+x)^p\geq 1+px$), we have the following which completes the proof.
\begin{align*}
    \dbinom{n-2}{r-2}\bigg/\dbinom{k-2}{r-2}&= \prod_{i=2}^{r-1}\frac{n-i}{k-i}=
\prod_{i=2}^{r-1}\left(1 + \frac{n-k}{k-i}\right)\geq \left(1 + \frac{n-k}{k-2}\right)^{r-2}\geq 1+\frac{(n-k)(r-2)}{k-2}\\
   &\geq \begin{cases}
        1+\dfrac{n-k}{k-2}=\dfrac{n-2}{k-2}>k-1 &\mbox { if  }r=3, n> k^2-3k+4,\\
        1+\dfrac{2(n-k)}{k-2}=\dfrac{2n-k-2}{k-2}>k-1  &\mbox { if  }r\geq 4.
    \end{cases}\\
\end{align*}

\bibliographystyle{plain}   

\end{document}